\theoremstyle{plain}
\newtheorem{thm}{Theorem}[section]
\newtheorem{lem}[thm]{Lemma}
\newtheorem{prop}[thm]{Proposition}
\newtheorem{cor}[thm]{Corollary}
\theoremstyle{definition}
\newtheorem{exmp}[thm]{Example}
\newtheorem{rem}[thm]{Remark}
\newtheorem{defn}[thm]{Definition}
\newtheorem{prob}[thm]{Problem}
\DeclareMathOperator{\Sol}{Sol}
\begin{document}
\title{On a generalization of distance sets}

\author{Hiroshi Nozaki\\ 
Graduate School of Mathematics 
Kyushu University,\\
nozaki@math.kyushu-u.ac.jp\\
and\\
Masashi Shinohara\\
Suzuka National College of Technology\\
shinohara@genl.suzuka-ct.ac.jp 
}

\maketitle

\begin{center}
\textbf{Abstract}
\end{center}

A subset $X$ in the $d$-dimensional Euclidean space is called a $k$-distance set 
if there are exactly $k$ distinct distances between two distinct points in $X$ 
and a subset $X$ is called a locally $k$-distance set if 
for any point $x$ in $X$, there are at most $k$ distinct distances between $x$ and other points in $X$. 

Delsarte, Goethals, and Seidel gave the Fisher type upper bound for the cardinalities of $k$-distance sets on a sphere in 1977. 
In the same way, we are able to give the same bound for locally $k$-distance sets on a sphere. 
In the first part of this paper, we prove that if $X$ is a locally $k$-distance set attaining the Fisher type upper bound, then determining a weight function $w$, $(X,w)$ is a tight weighted spherical $2k$-design. This result implies that locally $k$-distance sets attaining the Fisher type upper bound are $k$-distance sets. 
In the second part, we give a new absolute bound for the cardinalities of $k$-distance sets on a sphere.  
This upper bound is useful for $k$-distance sets for which the linear programming bound is not applicable.  
In the third part, we discuss about locally two-distance sets in Euclidean spaces. 
We give an upper bound for the cardinalities of locally two-distance sets in Euclidean spaces. 
Moreover, we prove that the existence of a spherical two-distance set in $(d-1)$-space which attains the Fisher type upper bound is equivalent to 
the existence of a locally two-distance set but not a two-distance set in $d$-space with more than $d(d+1)/2$ points. 
We also classify optimal (largest possible) locally two-distance sets for dimensions less than eight. 
In addition, we determine the maximum cardinalities of locally two-distance sets on a sphere for dimensions less than forty. 

\section{Introduction}
Let $\mathbb{R}^d$ be the $d$-dimensional Euclidean space. 
For $X\subset \mathbb{R}^d$, 
let $A(X)=\{d(x,y)|x, y\in X,x\ne y\}$ where $d(x,y)$ is the Euclidean distance 
between $x$ and $y$ in $\mathbb{R}^d$. 
We call $X$ a {\it $k$-distance set} if $|A(X)|=k$. 
Moreover for any $x\in X$, define $A_X(x)=\{d(x,y)|y\in X,x\ne y\}$. 
We will abbreviate $A(x)=A_X(x)$ whenever there is no risk of confusion.
A subset $X\subset \mathbb{R}^d$ is called a {\it locally $k$-distance set}  
if $|A_X(x)|\leq k$ for all $x\in X$. 
Clearly every $k$-distance set is a locally $k$-distance set. 
A locally $k$-distance set is said to be {\it proper} if it is not a $k$-distance set. 
Two subsets in $\mathbb{R}^d$ are said to be isomorphic if there exists a similar transformation from one to the other. 
An interesting problem for $k$-distance sets (resp.\ locally $k$-distance set) is to determine 
the largest possible cardinality of $k$-distance sets (resp.\ locally $k$-distance set) in $\mathbb{R}^d$. 
We denote this number by $DS_d(k)$ (resp.\ $LDS_d(k)$) and a $k$-distance set $X$ (resp.\ locally $k$-distance set $X$) in $\mathbb{R}^d$ 
is said to be {\it optimal} if $|X|=DS_d(k)$ (resp.\ $LDS_d(k)$). 
Moreover we denote the maximum cardinality of a $k$-distance set (resp.\ locally $k$-distance set) in the unit sphere $S^{d-1} \subset \mathbb{R}^d$ by $DS^{\ast}_d(k)$ (resp.\ $LDS^{\ast}_d(k)$). 

For upper bounds on the cardinalities of distance sets in $\mathbb{R}^d$, Bannai-Bannai-Stanton \cite{Ban2} 
and Blokhuis \cite{Blo} gave $DS_d(k)\leq \binom{d+k}{k}$. For $k=2$, the numbers $DS_d(2)$ are known for $d\leq 8$ 
(Kelly \cite{Kelly}, Croft \cite{Croft} and Lison\v{e}k \cite{Liso}). 
For $d=2$, the numbers $DS_2(k)$ are known and optimal $k$-distance sets are classified for $k\leq 5$ 
(Erd\H{o}s-Fishburn \cite{Erd2}, Shinohara \cite{Shino1}, \cite{Shino2}). 
Moreover we have $DS_3(3)=12$ and every optimal three-distance set is isomorphic to the set of 
vertices of a regular icosahedron (Shinohara \cite{Shino3}). 

\begin{center}
$
\begin{array}{c|cccccccc}
d & 1 & 2 & 3 & 4 & 5 & 6 & 7 & 8\\
\hline
DS_d(2) & 3 & 5 & 6 & 10 & 16 & 27 & 29 & 45\\
\end{array}
$
\quad 
$
\begin{array}{c|cccccccc}
k & 1 & 2 & 3 & 4 & 5 \\
\hline
DS_2(k) & 3 & 5 & 7 & 9 & 12\\
\end{array}
$

Table: Maximum cardinalities for two-distance sets and planar $k$-distance sets
\end{center} 

We have a lower bound for $DS^{\ast}_d(2)$ of $d(d+1)/2$ since the set of all midpoints of the edges of a $d$-dimensional regular simplex is a two-distance set on a sphere with $d(d+1)/2$ points. Musin determined that $DS^{\ast}_d(2)=d(d+1)/2$ for $7\leq d \leq 21$, $24\leq d \leq 39$ \cite{Musin}. For $2\leq d \leq 6 $, we have $DS^{\ast}_d(2)=DS_d(2)$ and for $d=22$, we have $DS^{\ast}_d(2)=275$. For $d=23$, $DS^{\ast}_d(2)=276$ or $277$ \cite{Musin}.  

Delsarte, Goethals, and Seidel gave the Fisher type upper bound for the cardinalities of $k$-distance sets on a sphere \cite{Del}. This upper bound also applies to locally $k$-distance sets on a sphere. 
\begin{thm}[Fisher type inequality \cite{Del}]
$(\mathrm{i})$ Let $X$ be a locally $k$-distance set on $S^{d-1}$. Then, $|X| \leq \binom{d+k-1}{k}+\binom{d+k-2}{k-1}(=:N_d(k))$. \\
$(\mathrm{ii})$ Let $X$ be an antipodal ({\it i.e.\ }for any $x\in X$, $-x \in X$) locally $k$-distance set on $S^{d-1}$. Then, $|X| \leq 2 \binom{d+k-2}{k-1}(=:N^{\prime}_d(k))$.
\end{thm}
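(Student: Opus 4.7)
The plan is to apply the Delsarte--Goethals--Seidel annihilator-polynomial method, which generalizes cleanly to the locally $k$-distance setting because the annihilator at a point $x$ depends only on the distances from $x$, hence only on $|A_X(x)|\le k$.

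For (i), set $I(x)=\{\langle x,y\rangle:y\in X\setminus\{x\}\}$, so $|I(x)|\le k$, and define
\[
F_x(y)=\prod_{\alpha\in I(x)}\frac{\langle x,y\rangle-\alpha}{1-\alpha},
\]
a polynomial in $y$ of degree at most $k$ with $F_x(x)=1$ and $F_x(z)=0$ for every $z\in X\setminus\{x\}$. Evaluating a hypothetical relation $\sum_{x\in X} c_x F_x=0$ at each point of $X$ forces every $c_x=0$, so $\{F_x\}_{x\in X}$ is linearly independent in the space $\mathrm{Pol}_k(S^{d-1})$ of polynomial functions on $S^{d-1}$ of degree at most $k$. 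Reading its dimension off the harmonic decomposition $\mathrm{Pol}_k(S^{d-1})=\bigoplus_{j=0}^{k} H_j$ with $\dim H_j=\binom{d+j-1}{j}-\binom{d+j-3}{j-2}$ yields $N_d(k)$, hence $|X|\le N_d(k)$.

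For (ii), antipodality forces $-1\in I(x)$, so already $F_x(-x)=0$: the polynomial is supported on $X$ at the single point $x$. I would then decompose it into its parts under $y\mapsto -y$, $F_x^{e/o}(y)=\tfrac12(F_x(y)\pm F_x(-y))$. For $z\in X\setminus\{\pm x\}$, antipodality gives $-z\in X\setminus\{\pm x\}$ and hence $F_x(\pm z)=0$, so both $F_x^{e}$ and $F_x^{o}$ vanish on $X\setminus\{\pm x\}$, while $F_x^{e}(\pm x)=\tfrac12$ and $F_x^{o}(\pm x)=\pm\tfrac12$. A short check using the $\pm$-symmetry of $I(x)\setminus\{-1\}$ (inherited from antipodality of $X$) gives $I(-x)=I(x)$ and $F_{-x}(y)=F_x(-y)$, so $F_{-x}^{e}=F_x^{e}$ and $F_{-x}^{o}=-F_x^{o}$: picking a system $X^+\subset X$ of representatives of the antipodal pairs yields $|X|/2$ distinct polynomials of each parity, and evaluating any linear combination at a point $y_0\in X^+$ isolates $c_{y_0}$, so the families $\{F_x^{e}\}_{x\in X^+}$ and $\{F_x^{o}\}_{x\in X^+}$ are each linearly independent.

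The bound is then closed out by choosing the parity matching $k-1\pmod 2$, i.e., using $F_x^{e}$ when $k$ is odd and $F_x^{o}$ when $k$ is even, so that the chosen polynomials lie in $\bigoplus_{0\le j\le k,\ j\equiv k-1\pmod 2}H_j$. Telescoping the formula for $\dim H_j$ over this parity-restricted range of $j$ collapses the sum to $\binom{d+k-2}{k-1}$, giving $|X^+|\le\binom{d+k-2}{k-1}$ and hence $|X|\le N'_d(k)$. The only delicate point is the parity matching: the opposite choice would telescope to the larger value $\binom{d+k-1}{k}$, so both the cancellation $F_x(-x)=0$ and the alignment of parities are what sharpens (i) into (ii).
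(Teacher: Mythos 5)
Your proof is correct. Note that the paper does not actually prove this theorem --- it cites Delsarte--Goethals--Seidel and merely observes that their argument goes through verbatim for locally $k$-distance sets --- but the construction you use is exactly the one the paper deploys later in the proof of Theorem \ref{weighted-locally}: the interpolating polynomials $F_x$ built from the at most $k$ inner products occurring at $x$, with linear independence read off by evaluation on $X$ and the count $\sum_{j\le k}h_j=N_d(k)$ obtained by telescoping. The only place you genuinely diverge is the antipodal case: the paper works directly on a half-set $Y$ with $X=Y\cup(-Y)$ and builds degree-$(k-1)$ polynomials of a single fixed parity from the squared inner products, $\prod\frac{(y,\xi)^2-\alpha^2}{1-\alpha^2}$ times a monomial correction, whereas you keep the full degree-$k$ polynomial (which already vanishes at $-x$ because $-1$ is a root) and then project onto the parity class congruent to $k-1$ modulo $2$. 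The two routes are equivalent; yours avoids the bookkeeping about whether $0$ lies among the inner products and makes the ``why $\binom{d+k-2}{k-1}$ rather than $\binom{d+k-1}{k}$'' question transparent via the parity-restricted telescoping, while the paper's version hands you polynomials of exact degree $k-1$ with positive leading Gegenbauer coefficient, which is what it needs for the subsequent tight-design argument but is superfluous for the cardinality bound alone.
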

It is well known that if a $k$-distance set $X$ attains this upper bound, then $X$ is a tight spherical design. We will give the definition of spherical designs in the next section. 
Of course, $k$-distance sets which attain this upper bound are optimal. This optimal $k$-distance set is very interesting because of its relationship with the design theory. 
Classification of tight spherical $t$-designs have been well studied in \cite{Bannai-Damerell1,Bannai-Damerell2,Bannai-Munemasa-Venkov}. 
Classifications of tight spherical $t$-designs are complete, except for $t=4,5,7$. 
This implies that classifications of $k$-distance sets (resp.\ antipodal $k$-distance sets) which attain this upper bound are complete,  
except for $k=2$ (resp.\ $k=3,4$).  
For $t=4$, a tight spherical four-design in $S^{d-1}$ exists only if $d=2$ or $d=(2l+1)^2-3$ for a positive integer $l$ and  
the existence of a tight spherical four-design in $S^{d-1}$ is known only for $d=2, 6$ or $22$.  \\

In Section \ref{sphere}, we prove the following theorem. 

\begin{thm} \label{weighted-locally}
$(\mathrm{i})$ Let $X$ be a locally $k$-distance set on $S^{d-1}$. If $|X|= N_d(k)$, then for some determined weight function $w$, $(X,w)$ is a tight weighted spherical $2k$-design. 
Conversely, if $(X,w)$ is
a tight weighted spherical $2k$-design, then $X$ is a locally $k$-distance set (indeed, $X$ is
a $k$-distance set). \\
$(\mathrm{ii})$ Let $X$ be an antipodal locally $k$-distance set on $S^{d-1}$. If $|X|= N_d^{\prime}(k)$, then for some determined weight function $w$, $(X,w)$ is a tight weighted spherical $(2k-1)$-design.  Conversely, if $(X,w)$ is a
tight weighted spherical $(2k - 1)$-design, then $X$ is an antipodal locally $k$-distance set
(indeed, $X$ is an antipodal $k$-distance set).
\end{thm}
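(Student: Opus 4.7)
My plan is to follow the Delsarte-Goethals-Seidel framework for $k$-distance sets, localized pointwise using the locally $k$-distance hypothesis. Write $P_k = \mathrm{Pol}_k(S^{d-1})$, so that $\dim P_k = N_d(k)$, and for each $x\in X$ introduce the annihilator polynomial
$$F_x(y) = \prod_{\alpha\in A(x)}\frac{2\langle x,y\rangle - 2 + \alpha^2}{\alpha^2},$$
a polynomial on $S^{d-1}$ of degree $|A(x)|\le k$ satisfying $F_x(z) = \delta_{xz}$ for every $z\in X$. Evaluating a relation $\sum c_xF_x = 0$ at each point of $X$ forces $c_x = 0$, which re-derives Theorem 1.1(i) and, under the hypothesis $|X| = N_d(k)$, shows that $\{F_x\}_{x\in X}$ is a basis of $P_k$; equivalently, the evaluation map $E\colon P_k\to\mathbb{R}^X$, $f\mapsto(f(z))_{z\in X}$, is a bijection.

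Because $P_k\cdot P_k = P_{2k}$, the weighted $2k$-design identity $\sum_zw(z)P(z) = \int P\,d\sigma$ on $P_{2k}$ is equivalent to the bilinear identity $\sum_zw(z)f(z)g(z) = \int fg\,d\sigma$ for all $f,g\in P_k$. Testing this on the basis $\{F_x\}$ collapses the discrete side to $w(x)\delta_{xy}$, so a tight weighted $2k$-design built on $X$ exists if and only if the $L^2$-Gram matrix $G = \bigl(\int F_xF_y\,d\sigma\bigr)_{x,y\in X}$ is diagonal, in which case the weights are uniquely determined by $w(x) = G_{xx} > 0$. The whole theorem therefore reduces to proving diagonality of $G$.

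The main obstacle is exactly this diagonality. My approach is to pass through the reproducing kernel $K_k(x,y)$ of $P_k$ in $L^2(S^{d-1})$: bijectivity of $E$ makes its $L^2$-adjoint $E^*\colon\delta_x\mapsto K_k(\cdot,x)$ a bijection too, so $\{K_k(\cdot,x)\}_{x\in X}$ is a second basis of $P_k$, and expressing it in the $F$-basis by Lagrange interpolation yields $K_k(\cdot,y) = \sum_xK_k(x,y)F_x$. The reproducing property then compresses into the clean matrix identity $G = K^{-1}$, where $K = (K_k(x,y))_{x,y\in X}$. Hence $G$ is diagonal precisely when $K_k(x,y) = 0$ for all distinct $x,y\in X$; and since $K_k$ is a polynomial of degree $k$ in $\langle x,y\rangle$, such vanishing restricts $X$ to at most $k$ distinct inner-product values, which together with the Fisher bound at level $k-1$ promotes $X$ to a genuine $k$-distance set---the ``indeed'' clause of (i). I would establish the vanishing by expanding each $F_x$ in Gegenbauer polynomials and double-counting $\sum_{x,y\in X}F_x(y) = |X|$ in the style of \cite{Del}, using the basis identity $\sum_xF_x \equiv 1$ (obtained by interpolating the constant $1$ against $\{F_x\}$) to pin down the harmonic components.

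The converse of (i) is more direct: tightness forces $E$ to be bijective (any $f\in P_k$ vanishing on $X$ satisfies $\int f^2\,d\sigma = \sum_zw(z)f(z)^2 = 0$, hence $f\equiv 0$), so the Lagrange polynomials $F_y\in P_k$ with $F_y(z) = \delta_{yz}$ exist, and applying the design identity to $K_k(x,\cdot)F_y(\cdot)\in P_{2k}$ together with the reproducing property gives $\delta_{xy} = F_y(x) = w(y)K_k(x,y)$; this both pins the weights to $w(x) = 1/K_k(x,x) > 0$ and forces $K_k(x,y) = 0$ for $x\ne y$, so $X$ is a $k$-distance set. Part (ii) runs along identical lines, with $P_k$ replaced by the appropriate antipodally compatible subspace of dimension $N'_d(k)$, the annihilator polynomials factoring out the antipodal distance $\alpha = 2$, and $K_k$ replaced by the reproducing kernel of that subspace; antipodal symmetry of $X$ kills the odd-harmonic contributions to the cubature identity and brings the design degree down to $2k-1$.
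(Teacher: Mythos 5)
Your setup---the Lagrange/annihilator polynomials $F_x$, the observation that $|X|=N_d(k)=\dim \mathrm{Pol}_k(S^{d-1})$ makes $\{F_x\}_{x\in X}$ a basis, and the reduction of the weighted $2k$-design identity to a bilinear identity on $\mathrm{Pol}_k$---matches the paper's, and your converse argument is sound (indeed more self-contained than the paper's, which just cites the literature for ``tight weighted designs are tight designs''). The gap is in the forward direction, which is the whole content of the theorem. You reduce everything to the diagonality of $K=(K_k(x,y))_{x,y\in X}$, i.e.\ to the statement that $K_k(x,y)=\sum_{i=0}^k G_i^{(d)}((x,y))=0$ for all distinct $x,y\in X$. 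But that is not an intermediate step: it is precisely the assertion that $X$ is a tight \emph{unweighted} spherical $2k$-design, the conclusion in its strongest form. Your proposed completion---``double-counting $\sum_{x,y}F_x(y)=|X|$ in the style of Delsarte''---is exactly the argument that works for genuine $k$-distance sets, where every $F_x$ has the same Gegenbauer expansion $\sum_i f_i G_i^{(d)}$ and one can write $\sum_{x,y}f_iG_i^{(d)}((x,y))=f_i\,||{}^tH_i\mathbf{1}||^2$ and exploit signs. For a \emph{locally} $k$-distance set the coefficients $f_i^{(x)}$ depend on $x$, the sums $\sum_{x,y}f_i^{(x)}G_i^{(d)}((x,y))$ are no longer quadratic forms of controlled sign, and the signs of $f_i^{(x)}$ for $i<k$ are uncontrolled in any case. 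This is exactly the difficulty that separates the locally-$k$-distance case from the classical one, and your sketch does not engage with it.

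The paper's way around this is a piece of finite-dimensional linear algebra that your outline is missing. Writing $H=[H_0,\ldots,H_k]$ and $\tilde H=[C_0H_0,\ldots,C_kH_k]$ with $C_i=\mathrm{Diag}(f_i^{(x_1)},\ldots,f_i^{(x_n)})$, the addition formula gives $\tilde H\,{}^tH=(F_{x_i}(x_j))_{i,j}=I$; since both factors are square ($n=\sum_{i=0}^k h_i$), the product in the reverse order is also $I$, and reading off two blocks yields ${}^tH_kC_kH_k=I$ and ${}^tH_{k-1}C_kH_k=0$. By Theorem~\ref{lemma of H} (which uses the nonnegativity of the Gegenbauer linearization coefficients to bootstrap these two identities up to the full degree-$2k$ cubature identity), this already makes $(X,w)$ with $w(x)=f_k^{(x)}>0$ a tight weighted $2k$-design; the weights are the \emph{top} Gegenbauer coefficients, which a priori vary with $x$, and diagonality of $K$ is never needed. (Note also that one must pad $F_x$ by the factor $(x,\xi)^{k-|A_{\textup{inn}}(x)|}$ so that it has degree exactly $k$ and $f_k^{(x)}>0$; your unpadded $F_x$ of degree $|A(x)|$ loses this.) To salvage your own route you would need an independent proof that $K$ is diagonal, and I do not see one that avoids something equivalent to this commutation trick.
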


This theorem implies that the concept of locally distance sets is a natural generalization of distance sets, because this theorem is a generalization of the relationship between tight spherical designs and distance sets.

Indeed, Theorem \ref{weighted-locally} implies the following. 

\begin{thm}\label{main of sphere}
$(\mathrm{i})$ Let $X$ be a locally $k$-distance set on $S^{d-1}$. If $|X|= N_d(k)$, then $X$ is a $k$ -distance set. \\
$(\mathrm{ii})$ Let $X$ be an antipodal locally $k$-distance set on $S^{d-1}$. If $|X|= N_d^{\prime}(k)$, then $X$ is a $k$-distance set. 
\end{thm}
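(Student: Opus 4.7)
The plan is to derive Theorem \ref{main of sphere} as an immediate corollary of Theorem \ref{weighted-locally}, which has already been stated and will be proved in Section \ref{sphere}. Theorem \ref{weighted-locally} packages two implications: a forward direction that produces a design from a boundary-attaining locally $k$-distance set, and a converse that upgrades the conclusion from ``locally $k$-distance'' to ``$k$-distance.'' Composing these two implications will yield Theorem \ref{main of sphere}.

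For part (i), I would argue as follows. Suppose $X \subset S^{d-1}$ is a locally $k$-distance set with $|X| = N_d(k)$. By the forward direction of Theorem \ref{weighted-locally}(i), there is a weight function $w$ such that $(X,w)$ is a tight weighted spherical $2k$-design. Now invoke the converse direction of the same theorem: any tight weighted spherical $2k$-design has carrier that is actually a $k$-distance set (not merely locally so). Therefore $X$ itself is a $k$-distance set, which is the desired conclusion.

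The argument for (ii) is completely parallel: for an antipodal locally $k$-distance set $X$ with $|X| = N_d'(k)$, the forward part of Theorem \ref{weighted-locally}(ii) yields a weight $w$ making $(X,w)$ a tight weighted spherical $(2k-1)$-design, and the converse part of Theorem \ref{weighted-locally}(ii) then guarantees that $X$ is an antipodal $k$-distance set. The main (indeed only) obstacle is proving Theorem \ref{weighted-locally} itself, but that is deferred to Section \ref{sphere} and may be assumed here; once it is in hand, Theorem \ref{main of sphere} requires no additional calculation.
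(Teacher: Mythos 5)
Your proposal is correct and matches the paper exactly: the authors also obtain Theorem \ref{main of sphere} by composing the forward direction of Theorem \ref{weighted-locally} (which produces the tight weighted spherical design from a boundary-attaining locally $k$-distance set) with its converse (which, via the cited fact that tight weighted designs are tight designs, forces $X$ to be a genuine $k$-distance set). No further argument is needed beyond Theorem \ref{weighted-locally} itself, just as you say.
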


In Section \ref{section bound}, we give a new upper bound for $k$-distance sets on $S^{d-1}$. 
This upper bound is useful for $k$-distance sets to which the linear programming bound is not applicable.

In Section \ref{section locally}, we discuss locally two-distance sets in $\mathbb{R}^d$. 
We first give an upper bound for the cardinalities of locally two-distance sets. 
Moreover, we mention that every proper locally two-distance set in $\mathbb{R}^d$ with more than $d(d+1)/2$ points   
contains a two-distance set in $S^{d-2}$ which attains the Fisher type upper bound. 
Note that a two-distance set in $\mathbb{R}^d$ with $d(d+1)/2$ points exists. 
We also classify optimal locally two-distance sets in $\mathbb{R}^d$ for $d<8$.  
In addition, we determine $LDS^\ast _2(d)$ for $d<40$ by using the value of $DS^{\ast}_d(2)$ for $d<40$. 
In particular, we  do not know $DS^{\ast}_{23}(2)$ but can determine $LDS^{\ast}_{23}(2)$.

\section{Locally distance sets and weighted spherical designs } \label{sphere}
We prove Theorem \ref{weighted-locally} in this section.  First, we give the definition of weighted spherical designs. 
\begin{defn}[Weighted spherical designs]
Let $X$ be a finite set on $S^{d-1}$. Let $w$ be a weight function: $w: X \rightarrow \mathbb{R}_{>0}$, such that $\sum_{x\in X} w(x)=1$. 
$(X,w)$ is called a weighted spherical $t$-design if the following equality holds for any polynomial $f$ in $d$ variables and of degree at most $t$:
$$
\frac{1}{|S^{d-1}|}\int_{S^{d-1}} f(x) d\sigma(x) = \sum_{x \in X}w(x)f(x),
$$
where the left hand side involves the integral of $f$ on the sphere. $X$ is called a spherical $t$-design if $w(x)=1/|X|$ for all $x\in X$. 
\end{defn} 
We have the following lower bound for the cardinalities of weighted spherical $t$-designs. 
\begin{thm}[Fisher type inequality \cite{Del, Delsarte-Seidel} ]
$(\mathrm{i})$ Let $X$ be a weighted spherical $2e$-design. Then, $|X| \geq \binom{d+e-1}{e}+\binom{d+e-2}{e-1}=N_d(e)$.\\
$(\mathrm{ii})$ Let $X$ be a weighted spherical $(2e-1)$-design. Then, $|X| \geq 2 \binom{d+e-2}{e-1}=N^{\prime}_d(e)$.  
\end{thm}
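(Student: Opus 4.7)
Both inequalities are instances of the standard evaluation-map argument of Delsarte, Goethals, and Seidel. In each case one exhibits a finite-dimensional space $V$ of polynomials on $S^{d-1}$ whose dimension equals the claimed lower bound, and then shows that the evaluation map
\[
\Phi_V\colon V \to \mathbb{R}^X, \quad f \mapsto (f(x))_{x \in X}
\]
is injective; since $\operatorname{rank}(\Phi_V) \leq |X|$, this immediately yields $|X| \geq \dim V$.

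For part (i), I would take $V = \mathrm{Pol}_e(S^{d-1})$, whose dimension is $N_d(e)$. If $f \in V$ vanishes on every point of $X$, then $f^2 \geq 0$ lies in $\mathrm{Pol}_{2e}(S^{d-1})$, and the weighted spherical $2e$-design property gives
\[
0 \;=\; \sum_{x \in X} w(x) f(x)^2 \;=\; \frac{1}{|S^{d-1}|}\int_{S^{d-1}} f(y)^2\, d\sigma(y),
\]
forcing $f \equiv 0$. Hence $\Phi_V$ is injective and $|X| \geq N_d(e)$.

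For part (ii), the direct squaring argument breaks down: for $f$ of degree $e$ the square $f^2$ has degree $2e$, which is strictly larger than the design's strength $2e-1$. The remedy is to choose a subspace $V \subset \mathrm{Pol}_e(S^{d-1})$ of dimension $N'_d(e) = 2\binom{d+e-2}{e-1}$ and run a Gram matrix argument on the positive semidefinite matrix $G_{ij} = \sum_{x \in X} w(x)\, p_i(x) p_j(x)$ for a basis $\{p_i\}$ of $V$. The crucial input is the orthogonal decomposition $\mathrm{Pol}_e(S^{d-1}) = \mathrm{Pol}_{e-1}(S^{d-1}) \oplus \mathrm{Harm}_e(S^{d-1})$: for any $p \in \mathrm{Pol}_{e-1}$ and any $q \in \mathrm{Pol}_e$, the product $pq$ has degree at most $2e-1$, so the $(2e-1)$-design property combined with $L^2$-orthogonality $\mathrm{Pol}_{e-1}\perp\mathrm{Harm}_e$ controls all blocks of $G$ except the $\mathrm{Harm}_e\times\mathrm{Harm}_e$ block. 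Writing $G = P D P^{\top}$ with $D = \operatorname{diag}(w(x))$ and $P_{ij} = p_i(x_j)$, positive definiteness of $G$ forces $\operatorname{rank}(P) \geq \dim V$, so $|X| \geq N'_d(e)$.

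The hard step is part (ii): the design property does not directly constrain $\sum_x w(x) f(x) g(x)$ when both $f,g \in \mathrm{Harm}_e(S^{d-1})$, because $fg$ then carries a degree-$2e$ harmonic component that escapes the design hypothesis. Controlling this top-harmonic contribution requires an additional ingredient -- a careful analysis of the projection of $fg$ onto $\mathrm{Harm}_{2e}(S^{d-1})$ via the addition formula for Gegenbauer polynomials -- and it is here that the exact dimension $N'_d(e)$ enters. This auxiliary harmonic-analytic step is the technical heart of the argument and is carried out in \cite{Del, Delsarte-Seidel}, to which I would refer for the precise construction.
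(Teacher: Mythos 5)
The paper itself does not prove this theorem --- it is quoted from \cite{Del, Delsarte-Seidel} --- so the only question is whether your argument stands on its own. Part (i) does: the evaluation map $\mathrm{Pol}_e(S^{d-1})\to\mathbb{R}^X$ together with the squaring trick $0=\sum_x w(x)f(x)^2=\frac{1}{|S^{d-1}|}\int f^2$ is exactly the standard Delsarte--Goethals--Seidel argument, and it goes through verbatim in the weighted setting because the weights are positive.

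Part (ii), however, is not a proof. First, the space you gesture at is dimensionally wrong: $\mathrm{Pol}_{e-1}(S^{d-1})\oplus\mathrm{Harm}_e(S^{d-1})$ is all of $\mathrm{Pol}_e(S^{d-1})$, of dimension $N_d(e)$, and you never identify which subspace $V$ of dimension $N_d'(e)=2\binom{d+e-2}{e-1}$ you intend to feed into the Gram-matrix argument; note that $N_d'(e)$ is not $\dim\mathrm{Pol}_{e-1}+h_e$ nor any other block dimension arising from your decomposition. Second, you concede that the $(2e-1)$-design hypothesis fails to control the $\mathrm{Harm}_e\times\mathrm{Harm}_e$ block and defer ``the technical heart'' to the references --- but that deferred step is the entire content of part (ii), and the references do not fill your gap because they argue differently. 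The standard proof uses $\mathrm{Hom}_{e-1}(\mathbb{R}^d)$, the \emph{homogeneous} polynomials of degree $e-1$, whose restriction to the sphere has dimension $\binom{d+e-2}{e-1}$, together with a parity trick: pick $\xi\in S^{d-1}$ with $(x,\xi)\neq 0$ for all $x\in X$, split $X=X^+\cup X^-$ by the sign of $(x,\xi)$, and show that evaluation $\mathrm{Hom}_{e-1}\to\mathbb{R}^{X^+}$ is injective. Indeed, if $f\in\mathrm{Hom}_{e-1}$ vanishes on $X^+$, then $F(x):=f(x)^2(x,\xi)$ has degree $2e-1$ and satisfies $F(-x)=-F(x)$, so $\int_{S^{d-1}}F=0$; the design property gives $\sum_{x\in X^-}w(x)f(x)^2(x,\xi)=0$ with every summand $\leq 0$, forcing $f$ to vanish on all of $X$, whence $\int f^2=\sum_x w(x)f(x)^2=0$ (degree $2e-2\leq 2e-1$) and $f\equiv 0$. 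This yields $|X^{\pm}|\geq\binom{d+e-2}{e-1}$ separately, hence the bound. Without this (or an equivalent) sign/parity argument, your outline for (ii) does not close.
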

If equality holds, $X$ is said to be tight. The following theorem shows a strong relationship between tight spherical $t$-designs and 
$k$-distance sets.
\begin{thm}[Delsarte, Goethals and Seidel \cite{Del}]
$(\mathrm{i})$ $X$ is a $k$-distance set on $S^{d-1}$ with $N_d(k)$ points if and only if $X$ is a tight spherical $2k$-design.  \\
$(\mathrm{ii})$ $X$ is an antipodal $k$-distance set on $S^{d-1}$ with $N_d^{\prime}(k)$ points if and only if $X$ is a tight spherical $(2k-1)$-design.  
\end{thm}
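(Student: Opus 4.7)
The plan is to apply the classical Delsarte--Goethals--Seidel polynomial method. I will fix an orthonormal basis $\{Y_{j,m}\}$ of the space $\mathrm{Harm}_j(S^{d-1})$ of spherical harmonics of degree $j$ and dimension $h_j$, let $Q_j$ be the Gegenbauer polynomial normalized by $Q_j(1)=h_j$, and use the addition formula $Q_j(\langle x,y\rangle)=\sum_{m=1}^{h_j}Y_{j,m}(x)Y_{j,m}(y)$. The key equivalence used throughout is that $X\subset S^{d-1}$ is a spherical $t$-design iff $\sum_{x,y\in X}Q_j(\langle x,y\rangle)=0$ for each $1\leq j\leq t$, which follows from the identity $\sum_{x,y\in X}Q_j(\langle x,y\rangle)=\sum_m(\sum_{x\in X}Y_{j,m}(x))^2\geq 0$.

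For part (i), direction $(\Leftarrow)$, I would suppose $X$ is a tight spherical $2k$-design with $|X|=N_d(k)$ and set $G(t)=\sum_{j=0}^{k}Q_j(t)$, a polynomial of degree $k$. Applying the $2k$-design identity to $G(\langle x_0,y\rangle)^2\in\mathrm{Pol}_{2k}$ for fixed $x_0\in X$ and matching it with the evaluation sum $\sum_{x\in X}G(\langle x_0,x\rangle)^2$, the tightness $|X|=N_d(k)$ forces $G(\langle x_0,x\rangle)=0$ for every $x\in X\setminus\{x_0\}$. Since $G$ has at most $k$ roots in $[-1,1)$, the inner products between distinct points of $X$ take at most $k$ values, so $X$ is a $k$-distance set.

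For the converse $(\Rightarrow)$, I would let $X$ be a $k$-distance set with distinct inner products $\alpha_1,\ldots,\alpha_k$ and $|X|=N_d(k)$, and form the annihilator polynomial $F(t)=\prod_{i=1}^{k}(t-\alpha_i)/(1-\alpha_i)$. The functions $F_x(y)=F(\langle x,y\rangle)$ lie in $\mathrm{Pol}_k(S^{d-1})$ and satisfy $F_x(y)=\delta_{x,y}$ on $X$, so $\{F_x\}_{x\in X}$ is a basis of $\mathrm{Pol}_k(S^{d-1})$. Expanding $F=\sum_{j=0}^{k}f_jQ_j$, I would compute $\sum_{x,y\in X}F(\langle x,y\rangle)$ in two ways: the left side equals $|X|$, while the addition formula gives $|X|^2f_0+\sum_{j=1}^{k}f_j\sum_m(\sum_{x\in X}Y_{j,m}(x))^2$. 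Combining with $f_0=1/|X|$ (obtained by integrating $F$ against the sphere and using the basis property of $\{F_x\}$) and the Gegenbauer-positivity $f_j\geq 0$ forces each square to vanish, establishing the $k$-design condition; the same sum-identity applied with $F$ replaced by products $F(t)p(t)$ for $p$ a polynomial of degree at most $k$ then extends this to the full $2k$-design condition.

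Part (ii) follows by the same scheme restricted to polynomials invariant under $y\mapsto -y$: the antipodal hypothesis makes only even-degree harmonics contribute, halving the Fisher dimension to $N_d'(k)$ and the design strength to $2k-1$. The main obstacle will be verifying the Gegenbauer-positivity of the coefficients of $F$ and verifying that the basis property of $\{F_x\}$ genuinely promotes a $k$-cubature identity to a $2k$-cubature identity; once these two points are settled the remaining manipulations are routine applications of the addition formula and the orthogonality of spherical harmonics.
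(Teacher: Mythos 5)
The paper states this theorem as a quoted result of Delsarte--Goethals--Seidel, but its own proof of Theorem \ref{weighted-locally} contains, as the special case of constant weights, exactly the argument needed here; measured against that, your $(\Leftarrow)$ direction is fine and standard (the identity $\sum_{y}G(\langle x_0,y\rangle)^2=|X|\sum_{j\le k}h_j=N_d(k)^2=G(1)^2$ kills all off-diagonal terms), but your $(\Rightarrow)$ direction has a genuine gap at precisely the two points you flag. First, the nonnegativity of the Gegenbauer coefficients $f_j$ of the annihilator $F$ is not available a priori: it is false for general $k$-distance sets (the entire Section \ref{section bound} of the paper exists because some $f_i$ can be negative), and for tight sets it is a \emph{consequence} of tightness rather than an input, so your derivation of the $k$-design property from $\sum_{x,y}F(\langle x,y\rangle)=|X|$ is circular as written. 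Second, the promotion from a $k$-design to a $2k$-design by "replacing $F$ with $Fp$" would require positivity of the Gegenbauer coefficients of $Fp$ up to degree $2k$ for enough polynomials $p$, which is even less accessible and is not how the cubature strength gets doubled.

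The correct route, which is what the paper's proof of Theorem \ref{weighted-locally} does (there the $f_i^{(x)}$ may depend on $x$; here they do not, so the diagonal matrices $C_i$ become scalars $f_iI$), is purely linear-algebraic and needs no a priori positivity. Since $|X|=N_d(k)=\dim\mathrm{Pol}_k(S^{d-1})$, the relation $F_{x_i}(x_j)=\delta_{ij}$ together with the addition formula reads
\[
[f_0H_0,f_1H_1,\ldots,f_kH_k]\,{}^t[H_0,H_1,\ldots,H_k]=I
\]
with \emph{square} matrices; hence both factors are invertible and the product in the reverse order is also $I$, which gives $f_j\,{}^tH_iH_j=\delta_{ij}I$ for all $0\le i,j\le k$. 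This simultaneously proves $f_j>0$ for every $j$ (since ${}^tH_jH_j$ is positive semidefinite and $f_j\,{}^tH_jH_j=I$), pins down $f_k=1/|X|$ by a trace computation, and delivers the orthogonality relations ${}^tH_kWH_k=I$, ${}^tH_{k-1}WH_k=0$ with $W=\frac{1}{|X|}I$ that characterize a $2k$-design by Theorem \ref{lemma of H}. If you reorganize your $(\Rightarrow)$ argument around this inversion step, the positivity you were trying to assume falls out for free and the passage from strength $k$ to strength $2k$ is automatic; the antipodal case (ii) then goes through as you describe, working with $Y$ where $X=Y\cup(-Y)$ and the degree-$(k-1)$ annihilator in $(y,\xi)^2$.
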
  
\begin{rem} \label{tight rem}
In particular, $X$ is a two-distance set on $S^{d-1}$ with $N_d(2)$ points if and only if $X$ is a tight spherical four-design. 
$X$ is an antipodal three-distance set on $S^{d-1}$ with $N_d^{\prime}(2)$ points if and only if $X$ is a tight spherical five-design. 
Note that the existence of a tight spherical four-design on $S^{d-2}$ is equivalent to the existence of a tight spherical five-design on $S^{d-1}$. 
Let $X$ be a tight spherical five-design on $S^{d-1}$. Then, we can put $A(X)=\{ \alpha, \beta, 2 \}\, (\alpha < \beta)$. 
For a fixed $x \in X$, we define $X_{\alpha}:=\{y \in X\mid d(x,y)=\alpha \}$. 
Then, we can regard $X_\alpha$ as a tight spherical four-design on $S^{d-2}$. 
This relationship between tight four-designs and five-designs is important in Section \ref{section locally}.    
\end{rem}
 Let $X=\{x_1,x_2,\ldots, x_n\}$ be a finite set on $S^{d-1}$. Let ${\rm Harm}_l(\mathbb{R}^d)$ be the linear space of all real harmonic homogeneous polynomials of degree $l$, in $d$ variables. We put $h_l:=\dim({\rm Harm}_l(\mathbb{R}^d))$. Let $\{\varphi_{l,i}\}_{i=0,1,\ldots, h_l}$ be an orthonormal basis of ${\rm Harm}_l(\mathbb{R}^{d-1})$ with respect to the inner product $\langle f, g \rangle= \frac{1}{|S^{d-1}|}\int_{S^{d-1}} f(x)g(x)d\sigma(x)$.
Let $H_l$ be the characteristic matrix of degree $l$, that is, its $(i,j)$-th entry is $\varphi_{l,j}(x_i)$. The following gives the definition of Gegenbauer polynomials and discusses the Addition
Formula which will be used in the succeeding discussion. 
 \begin{defn}
Gegenbauer polynomials are a set of
orthogonal polynomials $\{G_l^{(d)}(t) \mid l=1,2,\ldots \}$ of one variable $t$.
For each $l$, $G_l^{(d)}(t)$ is a 
polynomial of degree $l$, defined in the following manner.
\begin{enumerate}
\item $G_0^{(d)}(t) \equiv 1$, $G_1^{(d)}(t)=d t$.
\item $t G_l^{(d)}(t)=\lambda_{l+1}G_{l+1}^{(d)}(t)+(1-\lambda_{l-1})G_{l-1}^{(d)}(t)$ for $l \geq 1$, where $\lambda_l=\frac{l}{d+2l-2}$. 
\end{enumerate}
\end{defn}
Note that $G_l^{(d)}(1)=\dim({\rm Harm}_l(\mathbb{R}^d))=h_l$. Let $(,)$ be the standard inner product in $\mathbb{R}^d$.
\begin{thm}[Addition formula \cite{Del,Ban}] 
For any $x$, $y$ on $S^{d-1}$, we have
$$
\sum_{k=1}^{h_l} \varphi_{l,k}(x) \varphi_{l,k}(y)= G_l^{(d)}((x,y)).
$$
\end{thm}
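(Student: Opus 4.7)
The plan is to introduce the reproducing kernel $K_l(x,y):=\sum_{k=1}^{h_l}\varphi_{l,k}(x)\varphi_{l,k}(y)$, show that $K_l(x,y)$ depends on $x,y$ only through the inner product $(x,y)$, and then identify the resulting one-variable polynomial with $G_l^{(d)}$.

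First, I would observe that $K_l$ is independent of the chosen orthonormal basis: under an orthogonal change of basis of $\mathrm{Harm}_l(\mathbb{R}^d)$, the sum $\sum_k \varphi_{l,k}(x)\varphi_{l,k}(y)$ is preserved. In particular, for any $g\in O(d)$, the family $\{\varphi_{l,k}\circ g\}$ is again an orthonormal basis of $\mathrm{Harm}_l(\mathbb{R}^d)$, whence $K_l(gx,gy)=K_l(x,y)$. Since the orbits of the diagonal $O(d)$-action on $S^{d-1}\times S^{d-1}$ are determined precisely by $(x,y)$, there exists a function $P_l$ on $[-1,1]$ with $K_l(x,y)=P_l((x,y))$.

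Next, I would show that $P_l$ is a polynomial of degree at most $l$ and that $P_l(1)=h_l$. Fixing $y=e_d$, the function $K_l(x,e_d)$ is a harmonic homogeneous polynomial of degree $l$ in $x$ which is invariant under the stabilizer $O(d-1)$ of $e_d$; expressing it in terms of the generators $x_d$ and $x_1^2+\cdots+x_{d-1}^2=1-x_d^2$ of the ring of $O(d-1)$-invariants on the sphere gives a polynomial in $x_d=(x,e_d)$ of degree at most $l$. Moreover the diagonal $x\mapsto K_l(x,x)$ is $O(d)$-invariant, hence constant on $S^{d-1}$; integrating and using $\|\varphi_{l,k}\|^2=1$ shows this constant equals $h_l$, so $P_l(1)=h_l=G_l^{(d)}(1)$.

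Finally, to identify $P_l$ with $G_l^{(d)}$, I would prove that $\{P_l\}_{l\geq 0}$ is orthogonal on $[-1,1]$ with respect to the weight $(1-t^2)^{(d-3)/2}dt$: this follows from the orthogonality of $\mathrm{Harm}_l(\mathbb{R}^d)$ and $\mathrm{Harm}_m(\mathbb{R}^d)$ for $l\neq m$ in $L^2(S^{d-1})$, combined with the pushforward of the surface measure under $x\mapsto (x,e_d)$. A short induction then shows $\deg P_l=l$ (otherwise $P_l$ lies in $\mathrm{span}\{P_0,\dots,P_{l-1}\}$ and $P_l\perp P_l$ forces $P_l=0$, contradicting $P_l(1)=h_l>0$), and uniqueness of the orthogonal polynomial of degree $l$ up to scalar, together with the common normalization $P_l(1)=h_l=G_l^{(d)}(1)$, forces $P_l=G_l^{(d)}$. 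The main obstacle I anticipate is this final matching step: verifying that the paper's recurrence $tG_l^{(d)}=\lambda_{l+1}G_{l+1}^{(d)}+(1-\lambda_{l-1})G_{l-1}^{(d)}$ with $\lambda_l=l/(d+2l-2)$ does indeed describe the orthogonal polynomials for the weight $(1-t^2)^{(d-3)/2}dt$, normalized so that $G_l^{(d)}(1)=h_l$; this is classical, but requires a careful inductive bookkeeping using the binomial identity for $h_l$.
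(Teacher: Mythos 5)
The paper does not prove this statement at all: it is quoted as a classical result of Delsarte--Goethals--Seidel with citations to \cite{Del,Ban}, so there is no internal proof to compare against. Your argument is the standard textbook proof of the addition formula and is correct in outline: the basis-independence and $O(d)$-equivariance of the reproducing kernel $K_l$, the two-point homogeneity of the sphere reducing $K_l$ to a function $P_l$ of the inner product, the restriction to the zonal slice showing $\deg P_l\leq l$, the trace computation $P_l(1)=h_l$, and the orthogonality of $\{P_l\}$ for the weight $(1-t^2)^{(d-3)/2}\,dt$ coming from the mutual orthogonality of the spaces $\mathrm{Harm}_l(\mathbb{R}^d)$ in $L^2(S^{d-1})$. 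The one place where real work remains is exactly the one you flag: matching $P_l$ with the $G_l^{(d)}$ of the paper, which are defined by the three-term recurrence $tG_l^{(d)}=\lambda_{l+1}G_{l+1}^{(d)}+(1-\lambda_{l-1})G_{l-1}^{(d)}$ rather than by an orthogonality property. Since any orthogonal polynomial sequence for a symmetric weight satisfies a unique three-term recurrence once the normalization $P_l(1)=h_l$ is fixed, it suffices to verify that the $P_l$ satisfy this particular recurrence (equivalently, that the recurrence-defined $G_l^{(d)}$ are orthogonal for that weight and take the value $h_l$ at $1$); this is a finite, classical computation using $\lambda_l=l/(d+2l-2)$ and the formula for $h_l$, and with it supplied your proof is complete.
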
 
Let $I$ be the identity matrix, and ${}^t N$ be the transpose of a matrix $N$. 
 The following is a key theorem to prove Theorem \ref{main of sphere}.  
\begin{thm} \label{lemma of H}
The following are equivalent: \\
$(\mathrm{i})$ $(X,w)$ is a weighted spherical $t$-design.\\
$(\mathrm{ii})$ $^tH_{e} W H_{e}=I$ and $^tH_{e} W H_{r}=0$ for $ e = \lfloor \frac{t}{2} \rfloor$ and $r=e-(-1)^t$. Here, $W={\rm Diag}\{ w(x_1),w(x_2),\ldots,w(x_{n})\}$.  
\end{thm}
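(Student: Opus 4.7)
The plan is to prove the two directions separately; the substantial content lies in $(\mathrm{ii})\Rightarrow(\mathrm{i})$.

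For $(\mathrm{i})\Rightarrow(\mathrm{ii})$, I would unpack the matrix entries directly. The $(i,j)$-entry of ${}^tH_e W H_e$ equals $\sum_{x\in X} w(x)\varphi_{e,i}(x)\varphi_{e,j}(x)$, which is the weighted sum of a polynomial of degree $2e\le t$. Since $(X,w)$ is a weighted $t$-design, this sum equals $\frac{1}{|S^{d-1}|}\int_{S^{d-1}}\varphi_{e,i}\varphi_{e,j}\,d\sigma=\delta_{ij}$ by orthonormality of the basis. The analogous computation for ${}^tH_e W H_r$ gives $0$, because $\varphi_{e,i}\varphi_{r,j}$ has degree $e+r\le t$ (one checks $e+r=2e-1$ when $t=2e$ and $e+r=2e+1=t$ when $t=2e+1$) while the two factors sit in different spherical-harmonic eigenspaces of the Laplacian and are therefore orthogonal on the sphere.

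For $(\mathrm{ii})\Rightarrow(\mathrm{i})$, the task is to show $\sum_{x\in X}w(x)f(x)=\frac{1}{|S^{d-1}|}\int f\,d\sigma$ for every polynomial $f$ of degree at most $t$. On $S^{d-1}$, such polynomials exhaust the finite-dimensional space $\bigoplus_{l=0}^{t}{\rm Harm}_l(\mathbb{R}^d)$, so by linearity it suffices to verify the integration identity on any spanning subset. The key claim I would establish is that the family of products $\{\varphi_{e,i}\varphi_{e,j}\}_{i,j}\cup\{\varphi_{e,i}\varphi_{r,j}\}_{i,j}$, viewed as functions on $S^{d-1}$, already spans $\bigoplus_{l=0}^{t}{\rm Harm}_l$. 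Once this is in hand, the two matrix identities in (ii) translate at once into the required equality on that spanning set, hence everywhere. The normalization $\sum_x w(x)=1$ is automatic: taking the trace of ${}^tH_e W H_e=I$ yields $\sum_{x}w(x)\sum_i\varphi_{e,i}(x)^2=h_e$, and the addition formula gives $\sum_i\varphi_{e,i}(x)^2=G_e^{(d)}(1)=h_e$, forcing $\sum_x w(x)=1$.

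The heart of the argument is the spanning claim, which I would deduce from the $O(d)$-module decomposition ${\rm Harm}_e\otimes{\rm Harm}_l\big|_{S^{d-1}}={\rm Harm}_{e+l}\oplus{\rm Harm}_{e+l-2}\oplus\cdots\oplus{\rm Harm}_{|e-l|}$, which holds after reducing products modulo $|x|^2-1$. Applying this with $l=e$ contributes every ${\rm Harm}_m$ with $m$ of the same parity as $2e$ and $m\le 2e$, while $l=r=e\mp 1$ contributes every ${\rm Harm}_m$ of the opposite parity up to degree $e+r$; together these cover each ${\rm Harm}_l$ for $0\le l\le t$. The main technical obstacle I anticipate is verifying that the multiplication map actually surjects onto every Clebsch--Gordan summand (rather than only onto a proper submodule). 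A more elementary route, more in keeping with this section, is to use the addition formula and the Gegenbauer three-term recurrence $tG_l^{(d)}(t)=\lambda_{l+1}G_{l+1}^{(d)}(t)+(1-\lambda_{l-1})G_{l-1}^{(d)}(t)$ to induct on the harmonic degree, expressing the lower-order kernel functions $G_l^{(d)}((x,y))$ in terms of the kernels at levels $e$ and $r$ and then reading off the remaining design conditions from the matrix hypotheses.
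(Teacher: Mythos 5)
Your proof is correct, and in the hard direction $(\mathrm{ii})\Rightarrow(\mathrm{i})$ it takes a genuinely different route from the paper's. The paper argues quantitatively: Lemma \ref{2.9} writes $\left|\left|{}^tH_eWH_e-I\right|\right|^2$ and $\left|\left|{}^tH_eWH_r\right|\right|^2$ as nonnegative combinations $\sum_i q_i\left|\left|{}^tH_iWH_0\right|\right|^2$ of the squared moments, and the sign pattern of the linearization coefficients in Lemma \ref{coefficient} then forces ${}^tH_iWH_0=0$ for all $1\le i\le t$, which is exactly the design condition harmonic degree by harmonic degree. You argue structurally: hypothesis (ii) is the integration identity on the products $\varphi_{e,i}\varphi_{e,j}$ and $\varphi_{e,i}\varphi_{r,j}$, and you reduce everything to the claim that these products span ${\rm P}_t(S^{d-1})=\bigoplus_{l=0}^t{\rm Harm}_l(\mathbb{R}^d)$. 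That spanning claim is true, and the obstacle you flag (surjectivity of the multiplication map onto each summand ${\rm Harm}_m$) is settled by the very same Lemma \ref{coefficient}: for an orthonormal basis $\{\psi_{m,s}\}$ of ${\rm Harm}_m$ one computes, via the addition formula and the reproducing property of $G_m^{(d)}$, that $\sum_{i,j}\langle\varphi_{e,i}\varphi_{l,j},\psi_{m,s}\rangle^2=q_m(e,l)$, so the projection of ${\rm Harm}_e\cdot{\rm Harm}_l$ onto ${\rm Harm}_m$ is nonzero exactly when $q_m(e,l)>0$, and irreducibility of ${\rm Harm}_m$ under $O(d)$ (for $d\ge 3$) upgrades a nonzero projection to all of ${\rm Harm}_m$. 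So both proofs ultimately rest on Lemma \ref{coefficient}(i),(iii); yours is conceptually cleaner about which subspace of polynomials the hypotheses control, at the price of invoking irreducibility, while the paper's norm identity needs only nonnegativity of the $q_i$ and no representation theory. Your trace argument recovering $\sum_x w(x)=1$ from ${}^tH_eWH_e=I$ is a correct extra observation; the paper simply builds that normalization into the definition of a weight function.
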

We require the two following lemmas in order to prove Theorem \ref{lemma of H}. 
\begin{lem}[Lemma 3.2.8 in \cite{Ban} or \cite{Del}] \label{coefficient}
We have the Gegenbauer expansion 
$G_k^{(d)} G_{l}^{(d)} =\sum_{i=0}^{k+l} q_i(k,l) G_{i}^{(d)}$. 
Then, the following hold.\\ 
$(\mathrm{i})$ For any $i,k$ and $l$, $q_i(k,l) \geq 0$. \\
$(\mathrm{ii})$ For any $k$ and $l$, $q_0(k,l) = h_k \delta_{k,l}$, where $\delta_{k,l}=1$ if $k=l$ and $\delta_{k,l}=0$ if $k \ne l$.  \\
$(\mathrm{iii})$ $q_i(k,l) \ne 0$ if and only if $|k-l| \leq i \leq k+l$ and $i \equiv k+l \mod 2$. 
\end{lem}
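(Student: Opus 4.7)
The plan is to exploit the Addition Formula in each part, reducing identities about Gegenbauer polynomials to orthogonality statements for the spherical harmonics $\varphi_{k,i}$.

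For (ii), I would integrate the identity $G_k^{(d)}((x,y))\,G_l^{(d)}((x,y))=\sum_{i}q_i(k,l)\,G_i^{(d)}((x,y))$ over $y\in S^{d-1}$ against the uniform measure. On the right only the $i=0$ term survives, since for $i\ge 1$ the function $y\mapsto G_i^{(d)}((x,y))$ lies in $\mathrm{Harm}_i(\mathbb{R}^d)$ and therefore has zero spherical mean. Expanding the left by the Addition Formula and using the orthonormality relation $\tfrac{1}{|S^{d-1}|}\int\varphi_{k,i}\varphi_{l,j}\,d\sigma=\delta_{k,l}\delta_{i,j}$ collapses it to $\delta_{k,l}\sum_{i}\varphi_{k,i}(x)^{2}=\delta_{k,l}\,G_k^{(d)}((x,x))=\delta_{k,l}\,h_k$, which is exactly $q_0(k,l)$.

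For (i), I would apply the same idea to the triple product $G_j^{(d)}((x,y))\cdot G_k^{(d)}((x,y))\,G_l^{(d)}((x,y))$, integrating now over both $x$ and $y$. Expanding each of the three Gegenbauer factors on the left by the Addition Formula and using orthonormality twice yields
\[
q_j(k,l)\,h_j\,|S^{d-1}|^{2}\;=\;\sum_{\alpha,\beta,\gamma}\Bigl(\int_{S^{d-1}}\varphi_{k,\alpha}\,\varphi_{l,\beta}\,\varphi_{j,\gamma}\,d\sigma\Bigr)^{2}.
\]
Since the right side is a sum of squares, this immediately proves (i), and it simultaneously reduces the nonvanishing half of (iii) to the assertion that at least one such triple integral is nonzero, equivalently, that $\mathrm{Harm}_j$ is a constituent of the $O(d)$-module $\mathrm{Harm}_k\otimes\mathrm{Harm}_l$.

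For (iii), the parity condition $j\equiv k+l\pmod 2$ follows from $G_l^{(d)}(-t)=(-1)^l G_l^{(d)}(t)$, which holds because the Gegenbauer weight on $[-1,1]$ is even, combined with uniqueness of the expansion in the $G_i^{(d)}$ basis. The upper bound $j\le k+l$ is a degree count. For the lower bound $j\ge |k-l|$, assume $k\le l$; if $j<l-k$ then $G_j^{(d)}G_k^{(d)}$ has degree $<l$ and is hence orthogonal to $G_l^{(d)}$ under the Gegenbauer weight, forcing $q_j(k,l)=0$. The remaining nonvanishing statement is the main obstacle: it is the Clebsch--Gordan/Pieri-type fact that $\mathrm{Harm}_j$ occurs with multiplicity one in $\mathrm{Harm}_k\otimes\mathrm{Harm}_l$ exactly when $|k-l|\le j\le k+l$ with $j\equiv k+l\pmod 2$. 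I would establish it either by invoking the standard decomposition of tensor products of harmonic $O(d)$-representations, or by induction on $\min(k,l)$ using the three-term recurrence $tG_k^{(d)}=\lambda_{k+1}G_{k+1}^{(d)}+(1-\lambda_{k-1})G_{k-1}^{(d)}$ to propagate nonvanishing from the trivial base case $k=0$.
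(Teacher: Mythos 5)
First, note that the paper itself contains no proof of this lemma: it is quoted verbatim from \cite{Ban} (Lemma 3.2.8) and \cite{Del}, so your proposal can only be judged on its own merits. Parts (i) and (ii) of your argument are correct and are the standard ones: integrating the linearization identity in $y$ against the addition formula gives $q_0(k,l)=\delta_{k,l}h_k$, and the double integral of the triple product correctly identifies $q_j(k,l)\,h_j$ (up to the positive factor $|S^{d-1}|^2$) with the sum of squares of the integrals $\int\varphi_{k,\alpha}\varphi_{l,\beta}\varphi_{j,\gamma}\,d\sigma$, whence $q_j(k,l)\ge 0$. In (iii), the parity restriction, the degree bound $i\le k+l$, and the vanishing for $i<|k-l|$ (via orthogonality of $G_l$ to the lower-degree product $G_jG_k$ under the Gegenbauer weight) are all fine.

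The gap is exactly where you flag it, and neither of your proposed routes closes it as described. For the representation-theoretic route, the claimed equivalence ``some triple integral is nonzero iff $\mathrm{Harm}_j$ is a constituent of $\mathrm{Harm}_k\otimes\mathrm{Harm}_l$'' holds only left-to-right: multiplicity one tells you the space of $O(d)$-invariant trilinear forms on $\mathrm{Harm}_j\times\mathrm{Harm}_k\times\mathrm{Harm}_l$ is one-dimensional, but you must still show that the particular invariant form $(f,g,h)\mapsto\int fgh\,d\sigma$ is not the zero element of that line --- which is essentially the original question. For the inductive route, the recurrence yields
\[
\lambda_{k+1}\,q_j(k+1,l)=\lambda_j\,q_{j-1}(k,l)+(1-\lambda_j)\,q_{j+1}(k,l)-(1-\lambda_{k-1})\,q_j(k-1,l),
\]
and the negatively-signed last term means strict positivity does not propagate by induction. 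A decisive warning that no soft argument can succeed: for $d=2$ one has $G^{(2)}_j(\cos\theta)=2\cos j\theta$, hence $G^{(2)}_kG^{(2)}_l=G^{(2)}_{k+l}+G^{(2)}_{|k-l|}$ and $q_i(k,l)=0$ for every $i$ strictly between $|k-l|$ and $k+l$; that is, part (iii) is false for $d=2$, so any correct proof must use $d\ge 3$ (equivalently, Gegenbauer parameter $(d-2)/2>0$) in an essential, quantitative way. The standard way to finish is Dougall's explicit linearization formula for ultraspherical polynomials, whose coefficients are manifestly positive on the admissible range precisely when the parameter is positive; absent that (or an equivalent citation, as the paper makes), the nonvanishing half of (iii) remains unproved.
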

For an $m \times n$ matrix $M$, we define $||M||^2 := \sum_{i=1}^m \sum_{j=1}^n M(i,j)^2$, namely the sum of squares of all matrix entries.  
\begin{lem} \label{2.9}
For $k+l \geq 1$, 
\begin{eqnarray}
\left| \left|^tH_{k} W H_{l} -  \Delta_{k,l} \right| \right|^2= \sum_{i=1}^{k+l}q_i(k,l) \left| \left|^tH_{i} W H_{0} \right| \right|^2 
\end{eqnarray}
where 
$$
\Delta_{k,l}= \begin{cases}
I, \text{ if } k=l\\
0, \text{ if } k \ne l
\end{cases}.
$$
\end{lem}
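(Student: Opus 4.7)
The plan is to reduce both sides to explicit sums over pairs $(x_a,x_b)$ and then match them using the Gegenbauer expansion.

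First I would unwind the Frobenius norm on the left-hand side. Writing out entries,
\[
\| {}^tH_k W H_l \|^2 = \sum_{a,b=1}^n w(x_a)w(x_b)\!\left(\sum_{c=1}^{h_k}\varphi_{k,c}(x_a)\varphi_{k,c}(x_b)\right)\!\left(\sum_{d=1}^{h_l}\varphi_{l,d}(x_a)\varphi_{l,d}(x_b)\right),
\]
so by the Addition Formula the inner sums collapse to $G_k^{(d)}((x_a,x_b))$ and $G_l^{(d)}((x_a,x_b))$ respectively. Then I would insert the Gegenbauer expansion $G_k^{(d)}G_l^{(d)} = \sum_{i=0}^{k+l} q_i(k,l) G_i^{(d)}$ from Lemma~\ref{coefficient} and apply the Addition Formula in the reverse direction to each $G_i^{(d)}$. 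Since $\varphi_{0,1}\equiv 1$ under the chosen normalization, $H_0$ is the all-ones column vector, and one recognizes
\[
\sum_{a,b} w(x_a)w(x_b)\, G_i^{(d)}((x_a,x_b)) = \sum_{c=1}^{h_i}\!\left(\sum_{a} w(x_a)\varphi_{i,c}(x_a)\right)^{\!2} = \|{}^tH_i W H_0\|^2.
\]
This gives the master identity
\[
\| {}^tH_k W H_l \|^2 = \sum_{i=0}^{k+l} q_i(k,l)\, \|{}^tH_i W H_0\|^2.
\]

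The remaining task is to isolate the $i=0$ term. By Lemma~\ref{coefficient}(ii), $q_0(k,l)=h_k\delta_{k,l}$, and $\|{}^tH_0 W H_0\|^2 = (\sum_a w(x_a))^2 = 1$. If $k\ne l$ the $i=0$ contribution already vanishes and $\Delta_{k,l}=0$, so the identity is immediate. If $k=l$, I would expand
\[
\| {}^tH_k W H_k - I \|^2 = \|{}^tH_k W H_k\|^2 - 2\operatorname{tr}({}^tH_k W H_k) + h_k,
\]
and compute the trace via the Addition Formula: $\operatorname{tr}({}^tH_k W H_k)=\sum_a w(x_a)\sum_c \varphi_{k,c}(x_a)^2 = \sum_a w(x_a) G_k^{(d)}(1)=h_k$, using $G_k^{(d)}(1)=h_k$ and $\sum_a w(x_a)=1$. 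This cancels against the $i=0$ term exactly, leaving the stated sum from $i=1$ to $k+l$.

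There is no real obstacle here beyond careful bookkeeping; the one place to be attentive is the normalization convention, namely that $\varphi_{0,1}\equiv 1$ with respect to the inner product $\langle f,g\rangle=\frac{1}{|S^{d-1}|}\int_{S^{d-1}}fg\,d\sigma$, which is what makes $H_0$ the all-ones vector and lets the $i=0$ term of the master identity be interpreted as the correction coming from $\Delta_{k,l}$.
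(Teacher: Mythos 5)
Your proposal is correct and follows essentially the same route as the paper: compute $\|{}^tH_kWH_l\|^2$ via the Addition Formula, insert the Gegenbauer expansion of $G_k^{(d)}G_l^{(d)}$ to obtain the master identity $\sum_{i=0}^{k+l}q_i(k,l)\|{}^tH_iWH_0\|^2$, and then cancel the $i=0$ term against $\Delta_{k,l}$ using $q_0(k,l)=h_k\delta_{k,l}$. Your trace computation in the $k=l$ case is just a tidier packaging of the paper's entrywise expansion of the diagonal of ${}^tH_kWH_k-I$, so there is nothing substantively different.
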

\begin{proof}
Note that 
\begin{eqnarray}
& &||^tH_{k} W H_{l}||^2 =  \sum_{i=1}^{h_{k}} \sum_{j=1}^{h_{l}}\biggl(\sum_{x \in X} w(x) \varphi_{k,i}(x)  \varphi_{l,j}(x) \biggr)^2 \\
&=& \sum_{x \in X} \sum_{y \in X} w(x)w(y) \sum_{i=1}^{h_{k}} \varphi_{k,i}(x) \varphi_{k,i}(y) \sum_{j=1}^{h_{l}} \varphi_{l,j}(x) \varphi_{l,j}(y) \\
&=& \sum_{x\in X} \sum_{y \in X} w(x)w(y) G_{k}^{(d)}( ( x , y ) )  G_{l}^{(d)}( ( x , y ) ). \nonumber
\end{eqnarray} 
When $l=0$, we have 
\begin{eqnarray} \label{3.37}
||^tH_{k} W H_{0}||^2=\sum_{x\in X} \sum_{y \in X} w(x)w(y)  G_{k}^{(d)}( ( x , y ) ).  
\end{eqnarray}
If $k \ne l $, then
\begin{eqnarray}
||^tH_{k} W H_{l}||^2 &=& \sum_{x\in X} \sum_{y \in X} w(x)w(y)  G_{k}^{(d)}( ( x , y))  G_{l}^{(d)}( ( x , y ) ) \nonumber \\
&=& \sum_{x\in X} \sum_{y \in X} w(x)w(y)  \sum_{i=0}^{k+l}q_i(k,l)G_{i}^{(d)}( ( x , y ))  \nonumber \\
&=& \sum_{i=0}^{k+l}q_i(k,l) ||^tH_{i} W H_{0}||^2 \qquad (\because \text{equality } (\ref{3.37})) \nonumber \\
&=&\sum_{i=1}^{k+l}q_i(k,l) ||^tH_{i} W H_{0}||^2 \qquad (\because \text{Lemma }\ref{coefficient}). \nonumber 
\end{eqnarray}
If $k = l $, then the summation of the squares of the diagonal entries is 
\begin{eqnarray}
& &\sum_{i=1}^{h_{k}}\biggl( (  {}^tH_{k} W H_{k} -  I )(i,i) \biggr)^2 = \sum_{i=1}^{h_{k}} \biggl( \sum_{x\in X} w(x)  \varphi_{k,i}(x) \varphi_{k,i}(x) - 1  \biggr)^2 \nonumber \\
& = & \sum_{i=1}^{h_{k}}\Biggl( \biggl( \sum_{x\in X} w(x)  \varphi_{k,i}(x) \varphi_{k,i}(x) \biggr)^2   -2  \sum_{x\in X} w(x) \varphi_{k,i}(x) \varphi_{k,i}(x) +1 \Biggr) \nonumber \\
& = & \sum_{i=1}^{h_{k}} \biggl( \sum_{x\in X} w(x) \varphi_{k,i}(x) \varphi_{k,i}(x) \biggr)^2   -  2 \sum_{x\in X} w(x)  \sum_{i=1}^{h_{k}} \varphi_{k,i}(x) \varphi_{k,i}(x )  + h_{k}  \nonumber \\
& = & \sum_{i=1}^{h_{k}} \biggl( \sum_{x\in X} w(x)  \varphi_{k,i}(x) \varphi_{k,i}(x) \biggr)^2   -  2 \sum_{x\in X} w(x)    G_{k}^{(d)}(1) + h_{k}    \nonumber \\
&= & \sum_{i=1}^{h_{k}} \biggl( \sum_{x\in X} w(x)  \varphi_{k,i}(x) \varphi_{k,i}(x) \biggr)^2 - h_{k} \nonumber
\end{eqnarray}
Therefore, 
\begin{eqnarray}
 \left|\left|^tH_{k} W H_{k} - I \right|\right|^2 &=& ||^tH_{k} W H_{k}||^2  - h_{k} \nonumber \\
&=& \sum_{i=0}^{2 k}q_i(k,k) ||^tH_{i} W H_{0}||^2- h_{k} \nonumber \\
&=& \sum_{i=1}^{2 k}q_i(k,k) ||^tH_{i} W H_{0}||^2. 
\end{eqnarray}
\end{proof}

\begin{proof}[Proof of Theorem \ref{lemma of H}]
$\mathrm{(i)} \Rightarrow \mathrm{(ii)}$ is clear. We prove $\mathrm{(ii)} \Rightarrow \mathrm{(i)}$. 
 By Lemma \ref{2.9},
\begin{eqnarray} 
 \left|\left|^tH_{e} W H_{e} -  I \right|\right|^2= \sum_{i=1}^{2e} q_i(e,e) \left|\left|^tH_{i} W H_{0}\right|\right|^2 =0.
\end{eqnarray} 
 We have $^tH_{i} W H_{0} =0$ for even $i \leq t$, because $q_i(e,e)>0$ for even $i$, and $q_i(e,e)=0$ for odd $i$. On the other hand,  
\begin{eqnarray} 
 \left|\left|^tH_{e} W H_{r}\right|\right|^2= \sum_{i=1}^{2e-(-1)^t}q_i(e,r) \left|\left|^tH_{i} W H_{0}\right|\right|^2 =0.
\end{eqnarray} 
We have $^tH_{i} W H_{0} =0$ for odd $i \leq t$, because $q_i(e,r)>0$ for odd $i$, and $q_i(e,r)=0$ for even $i$.
 Therefore, these imply that for any $f \in {\rm P}_t(S^{d-1})$, the following equality holds: 
$$
 \frac{1}{|S^{d-1}|} \int_{S^{d-1}} f(x) d \sigma(x)= \sum_{x \in X} w(x)f(x). 
$$  
\end{proof}

\begin{proof}[Proof of Theorem \ref{weighted-locally}]
Let $X=\{x_1,x_2,\ldots , x_n\}$ be a locally $k$-distance set on $S^{d-1}$. Suppose $|X|=N_d(k)$.  For each $x \in X$, we define $A_{\textup{inn}}(x):=\{(x,y) \mid  y\in X, x \ne y \}$. 
For each $x \in X$, we define the polynomial in $d$ variables:  
$$
F_x(\xi):= (x, \xi )^{k-|A_{\textup{inn}}(x)|}\prod_{\alpha \in A_{\textup{inn}}(x)} \frac{(x,\xi)-\alpha}{1-\alpha},
$$
where $\xi=(\xi_1,\xi_2,\ldots,\xi_d)$.
$F_x(\xi)$ is of degree $k$ for all $x \in X$. For all $x_i,x_j \in X$, $F_{x_i}(x_j)=\delta_{i,j}$. 
We have the Gegenbauer expansion: 
$$
F_x(\xi)=\sum_{i=0}^k f_i^{(x)} G_i^{(d)}((x,\xi))
$$ 
where $f_i^{(x)}$ are real numbers. In particular, we remark that $f_k^{(x)}>0$ for every $x \in X$. By the addition formula, 
\begin{eqnarray} \label{eq}
F_x(\xi)=\sum_{i=0}^k f_i^{(x)} G_i^{(d)}((x,\xi))=\sum_{i=0}^k f_i^{(x)} \sum_{j=1}^{h_i} \varphi_{i,j}(x)\varphi_{i,j}(\xi)
\end{eqnarray}
 for $\xi \in S^{d-1}$. 
 We define the diagonal matrices $C_i:= {\rm Diag} \{f_{i}^{(x_1)}, f_i^{(x_2)}, \ldots , f_i^{(x_n)} \}$ for $0 \leq i \leq k$.  
$[C_0 H_0 , C_1 H_1, \ldots ,C_k H_k ]$ and $[H_0, H_1, \ldots H_k]$ are $n \times n$ matrices. 
By the equality (\ref{eq}), we have the equality: 
\begin{eqnarray}
[C_0 H_0 , C_1 H_1, \ldots ,C_k H_k ] \left[\begin{array}{c} 
^t{H_0}\\
^t{H_1}\\
\vdots \\
^t {H_k}
\end{array}
\right] = [F_{x_i}(x_j)]_{i,j}=I.
\end{eqnarray} 
Therefore, $[C_0 H_0 , C_1 H_1, \ldots ,C_k H_k ]$ and $[H_0, H_1, \ldots H_k]$ are non-singular matrices. Thus, 
\begin{eqnarray}
\left[\begin{array}{c} 
^t{H_0}\\
^t{H_1}\\
\vdots \\
^t {H_k}
\end{array}\right][C_0 H_0 , C_1 H_1, \ldots ,C_k H_k ] 
 &=& I \\
 \left[ 
\begin{array}{cccc}
^t H_0C_0H_0 & ^t H_0C_1H_1 & \cdots & ^tH_0C_kH_k \\  
^t H_1C_0H_0 & ^t H_1C_1H_1 & \cdots & ^tH_1C_kH_k \\
\vdots &\vdots&\ddots&\vdots \\
^t H_kC_0H_0 & ^t H_kC_1H_1 & \cdots & ^tH_kC_kH_k
 \end{array}
\right]&=& I.
\end{eqnarray} 
Therefore, $^tH_kC_kH_k=I$ and $^tH_{k-1}C_kH_k=0$. If we define the weight function $w(x):=f_k^{(x)}$ for $x\in X$, then $X$ is a tight weighted spherical $2k$-design on $S^{d-1}$ by Theorem \ref{lemma of H}.  \\

\noindent 
{\it Antipodal case} \quad 
Let $X$ be an antipodal $k$-distance set with $N_d^{\prime}(k)$ on $S^{d-1}$. There exist a subset $Y$ such that $X= Y \cup (-Y)$ and $|X|=2|Y|$. We define $A_{\textup{inn}}^2(x):=\{(x,y)^2 \mid y \in X, y \ne \pm x \}$ and 
$$
\varepsilon=\begin{cases}
1, \text{ if $k$ is even},  \\
0, \text { if $k$ is odd}.
\end{cases} 
$$
For each $y \in Y$, we define the  polynomial in $d$ variables 
$$
F_y(\xi):= (y, \xi)^{k-1-2|A_{\textup{inn}}^2(y)\setminus \{0\}|} \prod_{0 \ne \alpha^2 \in A_{\textup{inn}}^2(y)} \frac{(y,\xi)^2-\alpha^2}{1-\alpha^2}. 
$$
$F_y(\xi)$ is of degree $k-1$ for all $y \in Y$. For all $y_i,y_j \in Y$, $F_{y_i}(y_j)=\delta_{i,j}$.  
We have the Gegenbauer expansion: 
$$
F_y(\xi)=\sum_{i=0}^{k-1} f_i^{(y)} G_i^{(d)}((y,\xi)).
$$ 
Note that $f_i = 0$ for $i \equiv k \mod 2$.
 In particular, we remark that $f_{k-1}^{(y)}>0$ for every $y \in Y$. We define the diagonal matrices $C_i:= {\rm Diag} \{f_{i}^{(y_1)}, f_i^{(y_2)}, \ldots , f_i^{(y_{n/2})} \}$ for $0 \leq i \leq k-1$. Let $H^{(Y)}_l$ be the characteristic matrix with respect to $Y$. 
$[C_\varepsilon H^{(Y)}_\varepsilon , C_{\varepsilon+2} H^{(Y)}_{\varepsilon +2}, \ldots ,C_{k-1} H^{(Y)}_{k-1} ]$ and $[H^{(Y)}_\varepsilon, H^{(Y)}_{\varepsilon+2}, \ldots ,H^{(Y)}_{k-1}]$ are ${n/2} \times {n/2}$ matrices. 
By the addition formula, we have the equality: 
\begin{eqnarray}
[C_\varepsilon H^{(Y)}_\varepsilon , C_{\varepsilon+2} H^{(Y)}_{\varepsilon +2}, \ldots ,C_{k-1} H^{(Y)}_{k-1} ] \left[\begin{array}{c} 
^t{H^{(Y)}_\varepsilon}\\
^t{H^{(Y)}_{\varepsilon+2}}\\
\vdots \\
^t {H^{(Y)}_{k-1}}
\end{array}
\right] = I.
\end{eqnarray} 
Therefore, $[C_\varepsilon H^{(Y)}_\varepsilon , C_{\varepsilon+2} H^{(Y)}_{\varepsilon +2}, \ldots ,C_{k-1} H^{(Y)}_{k-1} ] $ and $[H^{(Y)}_\varepsilon, H^{(Y)}_{\varepsilon+2}, \ldots ,H^{(Y)}_{k-1}]$ are non-singular matrices. Thus, 
\begin{eqnarray}
\left[\begin{array}{c} 
^t{H^{(Y)}_\varepsilon}\\
^t{H^{(Y)}_{\varepsilon+2}}\\
\vdots \\
^t {H^{(Y)}_{k-1}}
\end{array}\right][C_\varepsilon H^{(Y)}_\varepsilon , C_{\varepsilon+2} H^{(Y)}_{\varepsilon +2}, \ldots ,C_{k-1} H^{(Y)}_{k-1} ] 
 &=& I \\
 \left[ 
\begin{array}{cccc}
^t H^{(Y)}_\varepsilon C_\varepsilon H^{(Y)}_\varepsilon & ^t H^{(Y)}_\varepsilon C_{\varepsilon+2} H^{(Y)}_{\varepsilon+2} & \cdots & ^t H^{(Y)}_\varepsilon C_{k-1} H^{(Y)}_{k-1} \\  
^t H^{(Y)}_{\varepsilon+2} C_\varepsilon H^{(Y)}_\varepsilon & ^t H^{(Y)}_{\varepsilon +2} C_{\varepsilon+2} H^{(Y)}_{\varepsilon+2} & \cdots & ^t H^{(Y)}_{\varepsilon +2} C_{k-1} H^{(Y)}_{k-1} \\
\vdots &\vdots&\ddots&\vdots \\
^t H^{(Y)}_{k-1}C_\varepsilon H^{(Y)}_\varepsilon & ^t H^{(Y)}_{k-1}C_{\varepsilon+2}H^{(Y)}_{\varepsilon+2} & \cdots & ^tH^{(Y)}_{k-1}C_{k-1}H^{(Y)}_{k-1}
 \end{array}
\right]&=& I.
\end{eqnarray} 
Therefore, $^tH^{(Y)}_{k-1}C_{k-1}H^{(Y)}_{k-1}=I$. Let $H_l$ be a characteristic matrix with respect to $X$. We select the weight function $w(x):=f_{k-1}^{(x)}/2$ and $w(-x)=w(x)$ for $x \in X$. Since $X$ is antipodal, this implies $^tH_{k-1}W H_{k-1}=I$ and $^tH_{k-1}W H_{k}= 0$. Therefore, $X$ is a tight weighted spherical $(2k-1)$-design by Theorem \ref{lemma of H}. 

$(\Leftarrow)$ It is known that tight weighted spherical $2k$-designs (resp.\ $(2k-1)$-design) are tight spherical $2k$-design (resp.\ $(2k-1)$-design) \cite{Taylor, Bannai-Bannai, Etsuko2}. 
Therefore, a tight weighted spherical $2k$-design (resp.\ $(2k-1)$-design) is a $k$-distance set (resp.\ antipodal $k$-distance set). 
\end{proof} 

Theorem \ref{weighted-locally} implies that (resp.\ antipodal) locally $k$-distance sets attaining the Fisher type upper bound are (resp.\ antipodal) $k$-distance sets .

\section{A new upper bound for $k$-distance sets on $S^{d-1}$} \label{section bound}
The following upper bound for the cardinalities of $k$-distance sets is well known. 
\begin{thm}[Linear programming bound \cite{Del}]
Let $X$ be a $k$-distance set on $S^{d-1}$. We define the polynomial $F_X(t):=\prod_{\alpha \in A_{\textup{inn}}(X)}(t-\alpha)$ for $X$ where $A_{\textup{inn}}(X):=\{(x,y) \mid x, y \in X , x\ne y  \}$. 
We have the Gegenbauer expansion 
$$
F_X(t)=\prod_{\alpha \in A_{\textup{inn}}(X)}(t-\alpha)= \sum_{i=0}^k f_i G_i^{(d)}(t),
$$
where $f_i$ are real numbers. If $f_0>0$ and $f_i\geq 0$ for all $1 \leq i \leq k$, then
$$
|X| \leq \frac{F_X(1)}{f_0}.
$$
\end{thm}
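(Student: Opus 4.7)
\medskip
\noindent\textbf{Proof proposal.}
The plan is to double-count the quantity
\[
S \;:=\; \sum_{x \in X}\sum_{y \in X} F_X\bigl((x,y)\bigr)
\]
in two different ways and then compare the results.

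For the first evaluation, I would exploit how $F_X$ was built: since $F_X(\alpha)=0$ for every $\alpha \in A_{\textup{inn}}(X)$, the off-diagonal contributions (those with $x \neq y$) all vanish, while each diagonal term contributes $F_X((x,x))=F_X(1)$. Hence $S = |X|\,F_X(1)$.

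For the second evaluation, I would substitute the given Gegenbauer expansion $F_X(t)=\sum_{i=0}^{k} f_i G_i^{(d)}(t)$ and then apply the Addition Formula term by term:
\[
\sum_{x,y \in X} G_i^{(d)}\bigl((x,y)\bigr) \;=\; \sum_{j=1}^{h_i}\Bigl(\sum_{x \in X}\varphi_{i,j}(x)\Bigr)^{\!2}.
\]
Because $G_0^{(d)}\equiv 1$, the $i=0$ summand contributes exactly $f_0|X|^2$ to $S$. For each $i \geq 1$ the inner expression is a sum of squares, hence nonnegative, and the hypothesis $f_i \geq 0$ then makes the whole $i$-th summand nonnegative. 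Summing over $i$ gives $S \geq f_0|X|^2$.

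Combining the two expressions yields $|X|\,F_X(1) \geq f_0|X|^2$, and dividing by $f_0|X|>0$ produces the desired bound $|X| \leq F_X(1)/f_0$. There is no real obstacle: both ingredients — that $F_X$ was designed to vanish on $A_{\textup{inn}}(X)$ and that the Addition Formula rewrites each sum $\sum_{x,y}G_i^{(d)}((x,y))$ as a sum of squares — are immediate from the setup, and the positivity hypotheses $f_0>0$ and $f_i\geq 0$ are exactly what is needed to convert the double-counting identity into an upper bound.
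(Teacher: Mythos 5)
Your proof is correct. The paper states this theorem without proof, citing Delsarte--Goethals--Seidel, and your double-counting of $\sum_{x,y}F_X((x,y))$ combined with the Addition Formula is exactly the standard argument from that source, so nothing further is needed.
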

This upper bound is very useful when $A_{\textup{inn}}(X)$ is given. However, if some $f_i$ happens to be negative, then we have no useful upper bound for the cardinalities of $k$-distance sets. In this section, we give a useful upper bound for this case. 
A proof of the following theorem builds upon Delsarte's ideas for the binary codes \cite{Del2}. 
   \begin{thm} \label{main}
Let $X$ be a $k$-distance set on $S^{d-1}$.  
We define the polynomial $F_X(t)$ of degree $k$:  
$$F_X(t):=\prod_{\alpha \in A_{\textup{inn}}(X)} (t-\alpha)= \sum_{i=0}^k f_i G_i^{(d)}(t),$$
where $f_i$ are real numbers. Then, 
\begin{eqnarray}
|X| \leq \sum_{i \text{ with } f_i>0} h_i, 
\end{eqnarray} 
where the summation is over $i$ with $0\leq i \leq k$ satisfying $f_i>0$ and $h_i=\dim({\rm Harm}_i(\mathbb{R}^d))$.  
\end{thm}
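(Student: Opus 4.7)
The plan is to follow the standard Delsarte matrix trick, using the ``annihilating polynomial'' $F_X$ and the addition formula to build a low-rank factorization of a full-rank matrix. Concretely, consider the $|X|\times|X|$ matrix $M$ whose $(i,j)$-entry is $F_X((x_i,x_j))$. Since $F_X$ vanishes at every element of $A_{\textup{inn}}(X)$, we have $M=F_X(1)\,I$, and $F_X(1)=\prod_{\alpha\in A_{\textup{inn}}(X)}(1-\alpha)>0$ because all inner products between distinct points on $S^{d-1}$ are strictly less than $1$. Thus $M$ is a positive scalar multiple of the identity and has rank $|X|$.

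On the other hand, the Gegenbauer expansion of $F_X$ together with the addition formula yields
\begin{equation*}
F_X((x_i,x_j))=\sum_{l=0}^{k} f_l\, G_l^{(d)}((x_i,x_j))=\sum_{l=0}^{k} f_l \sum_{m=1}^{h_l}\varphi_{l,m}(x_i)\varphi_{l,m}(x_j),
\end{equation*}
so as matrices $M=\sum_{l=0}^{k} f_l\, H_l\,{}^tH_l$. I would now separate the positive and negative coefficients:
\begin{equation*}
F_X(1)\,I+\sum_{l:\,f_l<0}(-f_l)\,H_l\,{}^tH_l \;=\; \sum_{l:\,f_l>0} f_l\,H_l\,{}^tH_l.
\end{equation*}

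Each matrix $H_l\,{}^tH_l$ is positive semidefinite, so the left-hand side is a sum of a strictly positive definite matrix ($F_X(1)\,I$) and a positive semidefinite matrix; hence it is positive definite and has rank $|X|$. The right-hand side has rank at most $\sum_{l:\,f_l>0}\operatorname{rank}(H_l\,{}^tH_l)\leq \sum_{l:\,f_l>0} h_l$, because $H_l$ has only $h_l$ columns. Comparing ranks gives $|X|\leq \sum_{l:\,f_l>0} h_l$.

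The only subtleties, neither of which looks hard, are (a) checking that $F_X(1)>0$ (immediate from $|(x,y)|<1$ for distinct points on the unit sphere), and (b) making sure the positive definiteness argument is clean—positive definite plus positive semidefinite is positive definite, and this is where the condition that we are genuinely adding (not subtracting) the $l$ with $f_l<0$ matters. The main ``work'' compared to the usual linear programming bound is precisely this sign-splitting maneuver, inspired by Delsarte's treatment of binary codes in \cite{Del2}, which converts an inequality like ``$f_l\geq 0$ for all $l$'' into a rank bound that still gives non-trivial information when some $f_l$ are negative.
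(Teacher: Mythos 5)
Your proof is correct and follows essentially the same route as the paper: both rest on the identity $F_X(1)I=\sum_{l}f_l H_l\,{}^tH_l$ obtained from the annihilating property of $F_X$ and the addition formula. The only (cosmetic) difference is the finish — you compare ranks after moving the negatively-weighted positive semidefinite terms to the other side, whereas the paper invokes the inertia of the block-diagonal quadratic form $F$; these are equivalent linear-algebra conclusions.
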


\begin{proof}
Let $X:=\{x_1,x_2,\ldots,x_{n}\}$ be a $k$-distance set on $S^{d-1}$. 
Let $\{ \varphi_{l,k}\}_{1\leq k \leq h_l}$ be an orthonormal basis of ${\rm Harm}_l(\mathbb{R}^d)$.
$H_l$ is the characteristic matrix. We have the Gegenbauer expansion $F_X(t)=\prod_{\alpha \in A_{\textup{inn}}(X)}\frac{t-\alpha}{1-\alpha}= \sum_{i=0}^k f_i G_i^{(d)}(t)$. Define the $ \sum_{i=0}^k h_i \times n $ matrix $H:={}^t [ H_0,  H_1, \ldots, H_k]$. 
By the addition formula, we get 
\[
	  {}^t H F H=I_{n}
\]
where $I_m$ is the identity matrix of degree $m$, and $F=f_0 I_{1} \oplus f_1 I_{h_1} \oplus \cdots \oplus f_s I_{h_s}$ (direct sum). Therefore, the column vectors of $H$ are linearly independent, and lie in the positive subspace of the quadratic form $F$. Thus, $n$ can not exceed the number of the positive entries of $F$.
\end{proof}
If $f_i>0$ for all $0\leq i\leq k$, then this upper bound is the same as the Fisher type inequality. 

By using a similar method, we prove a similar upper bound for the antipodal case. 
\begin{thm}[Antipodal case] \label{main anti}
 Let $X$ be an antipodal $k$-distance set on $S^{d-1}$.  
 We define the polynomial $F_X(t)$ of degree $k-1$:  
$$F_X(t):=\prod_{\alpha \in A_{\textup{inn}}(X) \setminus \{-1 \}} (t-\alpha)= \sum_{i=0}^{k-1} f_i G_i^{(d)}(t),$$
where the $f_i$ are real and $f_i=0$ for $i\equiv k \mod 2$. Then, 
\begin{eqnarray}
|X| \leq 2 \sum_{i \text{ with } f_i>0} h_i. 
\end{eqnarray}  
\end{thm}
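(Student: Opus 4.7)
The plan is to mirror the proof of Theorem \ref{main}, working with a half-set $Y \subset X$ chosen so that $X = Y \cup (-Y)$ and $|X| = 2|Y|$. The factor $(t-(-1))$ is dropped from $F_X$ precisely because the inner product $-1$ occurs in $X$ only between antipodal pairs: for any two distinct points $y_i, y_j \in Y$ one has $y_i \ne \pm y_j$ (since $Y \cap (-Y) = \emptyset$), so $(y_i, y_j) \in A_{\textup{inn}}(X) \setminus \{-1\}$ and therefore $F_X((y_i, y_j)) = 0$. On the diagonal, $F_X(1) = \prod_{\alpha \in A_{\textup{inn}}(X) \setminus \{-1\}}(1-\alpha) > 0$, since each such $\alpha$ satisfies $\alpha < 1$.

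Combining this with the addition formula, exactly as in the proof of Theorem \ref{main}, I obtain the matrix identity
\[
{}^t H^{(Y)}\, F\, H^{(Y)} = F_X(1)\, I_{|Y|},
\]
where $H^{(Y)} := {}^t[H^{(Y)}_0, H^{(Y)}_1, \ldots, H^{(Y)}_{k-1}]$ is assembled from the characteristic matrices with respect to $Y$, and $F := f_0 I_1 \oplus f_1 I_{h_1} \oplus \cdots \oplus f_{k-1} I_{h_{k-1}}$. After rescaling $H^{(Y)}$ by $1/\sqrt{F_X(1)}$, the column vectors are linearly independent and their span is a positive subspace for the quadratic form $F$. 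Since the number of positive eigenvalues of $F$, counted with multiplicity, equals $\sum_{i\,:\, f_i > 0} h_i$, this yields $|Y| \le \sum_{i\,:\, f_i > 0} h_i$, and hence $|X| \le 2 \sum_{i\,:\, f_i > 0} h_i$.

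The stated vanishing $f_i = 0$ for $i \equiv k \pmod 2$ falls out of the antipodal symmetry: $A_{\textup{inn}}(X) \setminus \{-1\}$ consists of pairs $\pm\alpha$ together with possibly the singleton $\{0\}$, so $F_X(t)$ is either even or odd in $t$, with parity opposite to that of $k$; since each $G_i^{(d)}$ has parity $(-1)^i$, the claim follows. This observation is not strictly needed for the counting argument (the blocks with $f_i = 0$ contribute no positive eigenvalues), but it keeps the Gegenbauer expansion consistent with the theorem statement.

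The only genuinely new ingredient beyond Theorem \ref{main} is the passage from $X$ to the half-set $Y$; once this reduction is made, the argument is a direct transcription. The main, and mild, obstacle is verifying that distinct $y_i, y_j \in Y$ really force $(y_i, y_j) \ne -1$, which is exactly the defining property of a set of antipodal representatives.
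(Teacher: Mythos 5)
Your proof is correct and is essentially the argument the paper intends: the authors only say the antipodal case follows ``by a similar method'' to Theorem \ref{main}, and your reduction to a half-set $Y$ with $X=Y\cup(-Y)$, followed by the addition-formula/positive-subspace count applied to $Y$, is exactly that method (it is the same half-set device used in the antipodal part of the proof of Theorem \ref{weighted-locally}). The verification that distinct $y_i,y_j\in Y$ give $(y_i,y_j)\in A_{\textup{inn}}(X)\setminus\{-1\}$, and the parity remark on the $f_i$, are both handled correctly.
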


\begin{cor} \label{musin}
Let $X$ be a two-distance set and $A_{\textup{inn}}(X)=\{ \alpha, \beta \}$. Then, 
$F_X(t):=(t-\alpha )(t-\beta)=\sum_{i=0}^2 f_i G_i^{(d)}(t)$ where $f_0=\alpha \beta + 1/d$, $f_1=-(\alpha+\beta)/d$ and $f_2=2/(d(d+2))$. 
If $\alpha +\beta \geq 0$, then 
$$
|X| \leq h_0 + h_2= \binom{d+1}{2}.
$$  
\end{cor}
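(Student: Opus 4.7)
This corollary is essentially a plug-in application of Theorem~\ref{main}, so the plan splits naturally into two steps: (1) verify the claimed Gegenbauer expansion of the degree-two polynomial $(t-\alpha)(t-\beta)$, and (2) read off the bound from Theorem~\ref{main} under the hypothesis $\alpha+\beta\geq 0$.

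For step (1), I would compute $G_0^{(d)}$, $G_1^{(d)}$ and $G_2^{(d)}$ explicitly using the given recursion. From the definition, $G_0^{(d)}(t)=1$ and $G_1^{(d)}(t)=dt$, and the recursion $tG_1^{(d)}(t)=\lambda_2 G_2^{(d)}(t)+(1-\lambda_0)G_1^{(d)}\text{'s predecessor}$—more precisely $tG_1^{(d)}(t)=\lambda_2 G_2^{(d)}(t)+(1-\lambda_0)G_0^{(d)}(t)$ with $\lambda_0=0$ and $\lambda_2=2/(d+2)$—gives $dt^2=\frac{2}{d+2}G_2^{(d)}(t)+1$. Solving yields
\[
t^2=\frac{2}{d(d+2)}G_2^{(d)}(t)+\frac{1}{d}G_0^{(d)}(t),\qquad t=\frac{1}{d}G_1^{(d)}(t).
\]
Substituting these into $(t-\alpha)(t-\beta)=t^2-(\alpha+\beta)t+\alpha\beta$ and collecting terms produces exactly
\[
f_0=\alpha\beta+\frac{1}{d},\qquad f_1=-\frac{\alpha+\beta}{d},\qquad f_2=\frac{2}{d(d+2)}.
\]

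For step (2), I apply Theorem~\ref{main}. Since $f_2=2/(d(d+2))>0$ unconditionally and the hypothesis $\alpha+\beta\geq 0$ forces $f_1\leq 0$, the index $i=1$ contributes nothing to the sum $\sum_{f_i>0}h_i$. Therefore the bound gives $|X|\leq h_0+h_2$ if $f_0>0$, and $|X|\leq h_2$ otherwise; in either case $|X|\leq h_0+h_2$. Finally, $h_0=1$ and $h_2=\binom{d+1}{2}-1$, so $h_0+h_2=\binom{d+1}{2}$, as claimed.

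There is no real obstacle here: the content is entirely mechanical once Theorem~\ref{main} is in hand. The only mildly delicate point is the sign bookkeeping for $f_1$ (the factor $-1/d$ flips the inequality direction, and one must note that the Gegenbauer expansion here is not the normalized form $(t-\alpha)(t-\beta)/((1-\alpha)(1-\beta))$ used in Theorem~\ref{main}'s proof, but this rescaling is by a positive constant and does not affect the signs of the $f_i$).
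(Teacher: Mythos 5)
Your proposal is correct and is exactly the intended argument: the paper states this as an immediate corollary of Theorem~\ref{main} (leaving the computation implicit), namely expanding $(t-\alpha)(t-\beta)$ in $G_0^{(d)},G_1^{(d)},G_2^{(d)}$ via the recursion and noting that $\alpha+\beta\geq 0$ kills the $i=1$ term, so $|X|\leq h_0+h_2=\binom{d+1}{2}$. Your side remarks on the positive rescaling by $\prod(1-\alpha)$ and on the sign of $f_1$ are also accurate.
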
 
Musin proved this corollary by using a polynomial method in \cite{Musin}. This corollary is used in proof of Theorem \ref{partial ans} in this paper. The following examples attain this upper bound in Corollary \ref{musin}. 
\begin{exmp} \label{mid regular}
Let $U_d$ be a $d$-dimensional regular simplex. We define
$$X:=\left\{ \left. \frac{x+y}{2} \right| x,y \in U_d, x \ne y \right\}$$  for $d \geq 7$.
Then, $X$ is a two-distance set on $S^{d-1}$, $|X|=d(d+1)/2$, $f_0>0$, $f_1\leq 0$ and $f_2>0$.  
\end{exmp}

Let us introduce some examples which attain the upper bounds in Theorem \ref{main} and \ref{main anti}.

\begin{cor} 
Let $X$ be a one-distance set and $A_{\textup{inn}}(X)=\{ \alpha \}$. Then, 
$F_X(t):=t-\alpha =\sum_{i=0}^1 f_i G_i^{(d)}(t)$ where $f_1= 1/d$ and $f_0=-\alpha$. 
If $\alpha \geq 0$, then 
$$
|X| \leq  h_1= d.
$$  
\end{cor}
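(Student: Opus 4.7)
The plan is to apply Theorem \ref{main} directly, so the task reduces to computing the Gegenbauer expansion of $F_X(t) = t - \alpha$ and checking the signs of the coefficients. From the defining recursion, $G_0^{(d)}(t) \equiv 1$ and $G_1^{(d)}(t) = d\,t$, so
\[
t - \alpha = -\alpha \cdot G_0^{(d)}(t) + \tfrac{1}{d} G_1^{(d)}(t),
\]
which identifies $f_0 = -\alpha$ and $f_1 = 1/d$, exactly the coefficients claimed in the statement.

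Next I would observe that under the hypothesis $\alpha \geq 0$ we have $f_0 = -\alpha \leq 0$ and $f_1 = 1/d > 0$, so the only index $i \in \{0,1\}$ with $f_i > 0$ is $i = 1$. Applying Theorem \ref{main} then gives
\[
|X| \leq \sum_{i\,:\,f_i > 0} h_i = h_1 = d,
\]
since $\mathrm{Harm}_1(\mathbb{R}^d)$ is spanned by the $d$ coordinate functions.

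There is no real obstacle here: the corollary is a direct specialisation of Theorem \ref{main}, entirely parallel to Corollary \ref{musin} for the two-distance case. The only minor subtlety worth noting explicitly is the boundary case $\alpha = 0$, where $f_0 = 0$ is not strictly positive, so index $0$ is still excluded from the sum and the bound $|X| \leq d$ persists.
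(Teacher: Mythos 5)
Your proof is correct and matches the paper's intent exactly: the corollary is stated as an immediate specialization of Theorem \ref{main}, with $f_0=-\alpha$ and $f_1=1/d$ read off from $G_0^{(d)}\equiv 1$ and $G_1^{(d)}(t)=dt$, and the bound follows since only $f_1$ is positive when $\alpha\geq 0$. Your remark on the boundary case $\alpha=0$ is a sensible (if minor) addition.
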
 
Clearly, a $d$-point $(d-1)$-dimensional regular simplex with a nonnegative inner product on $S^{d-1}$ attains this upper bound.  

\begin{cor} 
Let $X$ be an $k$-distance set on $S^{d-1}$. We have the Gegenbauer expansion $F_X(t)=\prod_{\alpha \in A_{\textup{inn}}(X)}(t-\alpha)=\sum_{i=0}^k f_i G_i^{(d)}(t)$. If $f_i > 0$ for all $i \equiv k \mod 2$ and $f_i \leq 0$ for all $i \equiv k-1 \mod 2$, then 
$$
|X| \leq \sum_{i=0}^{\lfloor \frac{k}{2} \rfloor}h_{k-2i}= \binom{d+k-1}{k}.
$$
\end{cor}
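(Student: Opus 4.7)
The plan is to deduce this corollary as an immediate consequence of Theorem \ref{main}. The hypothesis specifies the signs of the Gegenbauer coefficients $f_i$ exactly: $f_i>0$ when $i\equiv k\pmod 2$ and $f_i\leq 0$ when $i\equiv k-1\pmod 2$. Hence the set of indices in $\{0,1,\ldots,k\}$ with $f_i>0$ is precisely $\{k,k-2,k-4,\ldots\}$, which has $\lfloor k/2\rfloor+1$ elements and can be parametrized as $i=k-2j$ for $j=0,1,\ldots,\lfloor k/2\rfloor$. Feeding this into Theorem \ref{main} gives
\[
|X|\ \leq\ \sum_{i\,:\,f_i>0}h_i\ =\ \sum_{j=0}^{\lfloor k/2\rfloor}h_{k-2j}.
\]

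The remaining work is purely combinatorial: one must identify $\sum_{j=0}^{\lfloor k/2\rfloor}h_{k-2j}$ with $\binom{d+k-1}{k}$. The natural tool is the standard formula $h_l=\binom{d+l-1}{l}-\binom{d+l-3}{l-2}$ for the dimension of the space of real harmonic homogeneous polynomials of degree $l$ on $\mathbb{R}^d$ (valid for all $l\geq 0$ under the convention that binomial coefficients with negative lower index vanish). Substituting into the sum produces a telescoping series, since the negative term of $h_{k-2j}$ cancels the positive term of $h_{k-2j-2}$. What survives is the leading term $\binom{d+k-1}{k}$ from $h_k$ and the tail term at the end (either $-\binom{d-1}{0}$ cancelled by $h_0=1$ when $k$ is even, or $-\binom{d-2}{-1}=0$ when $k$ is odd), both of which give exactly $\binom{d+k-1}{k}$.

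Because both steps are routine once Theorem \ref{main} is granted, there is really no serious obstacle; the mild care needed is to handle the parity of $k$ uniformly in the telescoping and to make sure the boundary index $i=0$ or $i=1$ is included among the positive-coefficient indices (it is, since $0\equiv k\pmod 2$ when $k$ is even and $1\equiv k\pmod 2$ when $k$ is odd). The proof is essentially a one-line appeal to Theorem \ref{main} followed by this standard identity.
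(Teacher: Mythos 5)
Your proposal is correct and is exactly the derivation the paper intends (the paper states this corollary without proof as an immediate consequence of Theorem \ref{main}): the hypotheses pin down the positive-coefficient indices as $\{k,k-2,\ldots\}$, and the telescoping identity $\sum_{j=0}^{\lfloor k/2\rfloor}h_{k-2j}=\binom{d+k-1}{k}$ via $h_l=\binom{d+l-1}{l}-\binom{d+l-3}{l-2}$ is the standard decomposition of the degree-$k$ homogeneous polynomials into harmonic components. Nothing is missing.
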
 
The following examples attain their upper bounds. 
\begin{exmp}
Let $X$ be a tight spherical $(2k-1)$-design, that is, $X$ is an antipodal $k$-distance set with $N_d^{\prime}(k)$ points. There exist a subset $Y$ such that $X= Y \cup (-Y)$ and $|X|=2|Y|$. $Y$ is an $(k-1)$-distance set with $\binom{d+k-2}{k-1}$ points. Defining $F_Y(t):=\sum_{i=0}^{k-1}f_iG_i^{(d)}(t)$, we have $f_i = 0$ for all $i \equiv k \mod 2$, and $f_i > 0$ for all $i \equiv k-1 \mod 2$. 
\end{exmp}


\section{Locally two-distance sets} \label{section locally}
In this section, we will consider locally two-distance sets. 
Recall that a locally two-distance set is said to be {\it proper} if it is not a two-distance set. 
The following examples imply that there are infinitely many proper locally two-distance sets when their cardinalities are small for their dimensions.  

\begin{exmp}
Let $U_d$ be the vertex set of  a regular simplex in $\mathbb{R}^d$ and $O$ be the center of the regular simplex. 
Let $y$ be a point on the line passing through $x\in U_d$ and $O$. Then $U_d\cup \{y\}$ is a locally two-distance set. 
Except for finitely many exceptions, such locally two-distance sets are proper. 
\end{exmp}

\begin{exmp}
Let $\{{e_1}, {e_2}, \ldots , {e_d}\}$ be an orthonormal basis of $\mathbb{R}^d$. 
Let $$X=\{x_1, y_1, x_2, y_2, \ldots ,x_{k-1}, y_{k-1}\}$$ where $$x_1={e_1}, \quad y_1=-{e_1}$$ and 
$$jx_j=e_{2j-2}+\sqrt{j^2-1}e_{2j-1}, \quad jy_j=e_{2j-2}-\sqrt{j^2-1}e_{2j-1}$$ for $2\leq j \leq k-1$. 
Then $X$ is a locally two-distance set and a $k$-distance set in $\mathbb{R}^{2k-3}$. 
\end{exmp}

\subsection{An upper bound for the cardinalities of locally two-distance sets}

\noindent 
\begin{lem}\label{lemdr}
$(\mathrm{i})$ Let $X\subset \mathbb{R}^d$ be a locally two-distance set with at least $d+2$ points. 
If $d\geq 2$, then there exist points $x, x' \in X$ ($x\ne x'$) such that $A(x)=A(x')=\{\alpha , \alpha '\}$ for some $\alpha , \alpha '\in \mathbb{R}_{>0}$ 
$(\alpha \ne \alpha ')$. \\
$(\mathrm{ii})$ Let $X$ be a locally two-distance set in $\mathbb{R}^d$ with $n\geq d+2$ points. 
Then there exists $Y\subset X$ with $|Y|=n-d$ and $|A(x)|=2$ for any $x\in Y$. 
\end{lem}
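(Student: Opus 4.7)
My plan is to prove (ii) first and then deduce (i) from it. For (ii), let $B := \{x \in X : |A(x)| \le 1\}$; the goal is to show $|B| \le d$, after which $Y$ can be taken as any subset of $X \setminus B$ of size $n-d$. Since $n \ge d+2 \ge 4$, every $x \in B$ must have $|A(x)| = 1$. If $x, x' \in B$, then the single distance from $x$ equals $d(x,x')$, which also equals the single distance from $x'$, so all members of $B$ share one common distance $r$, and every element of $X$ lies at distance $r$ from every element of $B$. Consequently $B$ is a regular simplex of edge $r$ in $\mathbb{R}^d$, giving $|B| \le d+1$. If $|B| = d+1$, then $B$ would be a full $d$-simplex whose only equidistant point is the circumcenter, at distance $r\sqrt{d/(2(d+1))} < r$ from its vertices, so no further point of $X$ could sit at distance $r$ from every vertex of $B$; this forces $X = B$ and $n = d+1$, contradicting $n \ge d+2$. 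Hence $|B| \le d$.

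For (i), set $G := X \setminus B$, so by (ii) we have $|G| \ge 2$. I would argue by contradiction, assuming no two elements of $G$ share the same pair $A(\cdot)$. The key observation is that for distinct $y, y' \in G$, $d(y,y') \in A(y)\cap A(y')$; combined with $|A(y)|=|A(y')|=2$ and $A(y) \ne A(y')$, this forces $A(y)\cap A(y') = \{d(y,y')\}$. I would then invoke the elementary fact that a family of $\ge 4$ distinct $2$-sets pairwise meeting in a single element must have a common element (a ``star''), while three such sets form either a star or a triangle $\{a,b\},\{a,c\},\{b,c\}$. In the star case (which covers $|G|\ge 4$ and some of $|G|=3$), the common value $a$ equals every pairwise $G$-distance, so $G$ is itself a regular simplex of edge $a$, whence $|G| \le d+1$ and $B$ is nonempty; the second distance $c_y$ in $A(y)=\{a,c_y\}$ must then be realized by a point outside $G$, i.e.\ in $B$, forcing $c_y = r$ for every $y$, which contradicts the distinctness of the $c_y$ as soon as $|G|\ge 2$. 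In the triangle case ($|G|=3$), the three pair-sets have empty triple intersection, so no single value $r$ can sit in all of them; this forces $B = \emptyset$ and $n = 3 < d+2$, a contradiction.

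The remaining case $|G|=2$ needs a short geometric step: by (ii), $|B|=d$ and $n=d+2$, and the same analysis shows $r = d(y_1,y_2)$, so both $B \cup \{y_1\}$ and $B \cup \{y_2\}$ are full regular $d$-simplices of edge $r$ sharing the $(d-1)$-face $B$. Thus $y_1,y_2$ are reflections across the hyperplane spanned by $B$, and the standard regular-simplex height formula gives $d(y_1,y_2) = r\sqrt{2(d+1)/d} > r$ for $d \ge 2$, contradicting $d(y_1,y_2)=r$. All cases produce contradictions, so (i) follows. The main obstacle is organising the combinatorial case analysis, in particular the star/triangle dichotomy for distinct $2$-sets with singleton pairwise intersections, and checking at each step that the structure imposed on $G$ clashes with the rigidity $B$ forces on the distances; the height computation in the $|G|=2$ case is elementary but must be made explicit.
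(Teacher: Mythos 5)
Your proof is correct, and it splits into two parts of different character. Part (ii) is essentially the paper's argument: you both isolate the set $B$ (the paper's $Y'$) of points seeing only one distance, observe that every point of $X$ lies at the common distance $r$ from every point of $B$, bound $|B|\le d+1$ via the regular simplex, and exclude $|B|=d+1$ (the paper by noting $B\cup\{y\}$ would be a $(d+2)$-point one-distance set, you by the circumradius computation $r\sqrt{d/(2(d+1))}<r$ --- the same content). For part (i), however, you take a genuinely different and considerably longer route. The paper's proof is direct: pick any $x$ with $A(x)=\{\alpha_1,\alpha_2\}$ (one exists since $|X|>DS_d(1)$), split $X\setminus\{x\}$ into $Y_1,Y_2$ according to the distance to $x$; if some cross-distance $d(y_1,y_2)$ lies in $\{\alpha_1,\alpha_2\}$ then $x$ already shares its two-element $A$-set with $y_1$ or $y_2$, and otherwise all cross-distances equal a single new value $\beta$, so every point of $Y_i$ has $A$-set $\{\alpha_i,\beta\}$ and the larger of $Y_1,Y_2$ (of size $\ge 2$ since $|X|\ge 4$) supplies the required pair. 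You instead negate the conclusion, note that the $2$-sets $A(y)$, $y\in G=X\setminus B$, are distinct and pairwise meet in the singleton $\{d(y,y')\}$, classify this intersection pattern into star and triangle configurations, and kill each case using regular-simplex rigidity (including the apex-reflection height computation $d(y_1,y_2)=r\sqrt{2(d+1)/d}>r$ when $|G|=2$). Both arguments are valid; the paper's is shorter and purely combinatorial beyond $DS_d(1)=d+1$, while yours extracts more structure from a hypothetical counterexample at the cost of a multi-case analysis and some metric computations.
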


\begin{proof}
(i)  Let $X$ be a locally two-distance set in $\mathbb{R}^d$ with more than $d+1$ points. 
Let $B (\alpha ;x)=\{y\in X | d(x,y)=\alpha \}$ for any $x\in X$ and $\alpha \in A(x)$. 
Since $DS_d(1)=d+1$, there exists $x\in X$ such that $|A(x)|=2$. 
Let $A(x)=\{\alpha _1, \alpha _2\}$, $Y_1=B(\alpha _1; x)$ and $Y_2=B(\alpha _2; x)$. 
For $y_1\in Y_1$ and $y_2\in Y_2$, if $d(y_1,y_2)\in \{\alpha _1, \alpha _2\}$, 
then we have $A(x)=A(y_1)$ or $A(x)=A(y_2)$ and this lemma holds. 
Otherwise, there exists $\beta \notin \{\alpha _1,\alpha _2\}$ such that 
$d(y_1,y_2)=\beta$ for all $y_1\in Y_1$ and $y_2\in Y_2$. 
Thus $A(y_i)=\{\alpha _i, \beta\}$ for any $y_i\in Y_i$ ($i=1, 2$). 
Moreover, $|Y_1|\geq 2$ or $|Y_2|\geq 2$ since $|X|\geq 4$. \\
(ii)  Let $X$ be a locally two-distance set in $\mathbb{R}^d$ with $n\geq d+2$ points. 
Let $Y'$ be the set of all points in $X$ with $|A(x)|=1$. 
Then clearly $A(x)=A(x')$ for any $x,x'\in Y'$. 
Therefore $Y'$ is a one-distance set and $|Y'|\leq d+1$. 
Moreover if $|Y'|=d+1$, then $Y'\cup \{y\}$ must be a 
one-distance set for any $y\in X\setminus Y'$, which is a contradiction.  
Thus $|Y'|\leq d$ and $|X\setminus Y'|\geq n-d$. 
\end{proof}

\begin{rem}\label{lower}
When we consider optimal locally two-distance sets, the condition $|X|\geq d+2$ in Lemma \ref{lemdr} is not so important 
because there is a lower bound $d(d+1)/2\leq DS_d(2) \leq LDS_d(2)$ (cf.\ Example \ref{mid regular}). 
\end{rem}

Let $X$ be a locally two-distance set. 
A subset $Y\subset X$ is called a {\it saturated subset} if $|Y|\geq 2$ and $Y$ is a maximal subset such that there exists $\alpha $, $\beta $ ($\alpha \ne \beta $) 
with $A_X(y)=\{\alpha , \beta \}$  for any $y\in Y$. 
Lemma \ref{lemdr} assures us that every locally two-distance set in $\mathbb{R}^d$ with at least $d+2$ points 
contains a saturated subset. Let $Y=\{y_1, y_2, \ldots y_m\}\subset X$ be a saturated subset.  
Then $Y$ is a two-distance set and $X\setminus Y$ is a locally two-distance set in the space $\{x\in \mathbb{R}^d|d(y_1,x)=d(y_2,x)=\cdots =d(y_m,x)\}$ by maximality. 
If $X\setminus Y\ne \emptyset$, then all points in $Y$ are on a common sphere. 
Moreover $Y\cup \{x\}$ is a two-distance set for any $x\in X\setminus Y$. 

\begin{lem}\label{dimension}
Let $Y=\{y_0, y_1, \ldots, y_{m-1}\}\subset \mathbb{R}^d$. 
Without loss of generality, we may assume that $y_0$ is the origin of $\mathbb{R}^d$. 
Let $\dim(Y)$ be the dimension of the space spanned by $Y$ 
and $\Sol(Y)=\{x\in \mathbb{R}^d|d(y_0,x)=d(y_1,x)=\cdots =d(y_{m-1},x)\}$. 
Then $\Sol(Y)$ is contained in a $(d-\dim(Y))$-dimensional affine subspace if $\Sol(Y)\ne \emptyset$.
\end{lem}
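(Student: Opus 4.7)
The plan is to convert the defining equations of $\Sol(Y)$ into a system of linear equations in $x$, and then read off the dimension of the solution set from the rank of the resulting coefficient matrix.

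First I would write out the condition $d(y_0,x)=d(y_i,x)$ for $1\leq i\leq m-1$. Since $y_0=0$, squaring gives $\|x\|^2=\|x-y_i\|^2$, which simplifies to the linear equation $2(y_i,x)=\|y_i\|^2$. So
\begin{equation*}
\Sol(Y)=\{x\in\mathbb{R}^d \mid 2(y_i,x)=\|y_i\|^2 \text{ for all } 1\leq i\leq m-1\}.
\end{equation*}
This is the solution set of an inhomogeneous system of $m-1$ linear equations; when nonempty it is a translate of the kernel of the $(m-1)\times d$ coefficient matrix $M$ whose $i$-th row is $y_i^{\mathsf t}$.

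Next I would identify the rank of $M$. The row space of $M$ is exactly $\mathrm{span}\{y_1,\ldots,y_{m-1}\}$, and since $y_0=0$ this span coincides with $\mathrm{span}(Y)$, which has dimension $\dim(Y)$ by definition. Hence $\mathrm{rank}(M)=\dim(Y)$, and by the rank-nullity theorem the kernel of $M$ has dimension $d-\dim(Y)$. Therefore, provided $\Sol(Y)$ is nonempty, it is an affine subspace of dimension $d-\dim(Y)$, which in particular is contained in such an affine subspace.

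There is no real obstacle here; the only subtlety is remembering that $\dim(Y)$ as defined in the statement refers to the linear span of $Y$ (rather than the affine span), which is legitimate because the hypothesis $y_0=0$ guarantees that the linear and affine spans of $Y$ coincide. With that identification, the chain $\dim\Sol(Y)=d-\mathrm{rank}(M)=d-\dim\mathrm{span}(Y)=d-\dim(Y)$ is immediate.
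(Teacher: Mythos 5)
Your proposal is correct and follows essentially the same route as the paper: square the distance conditions to obtain the linear system $2(y_i,x)=\|y_i\|^2$, observe that the coefficient matrix has rank $\dim(Y)$ because its row space is $\mathrm{span}\{y_1,\ldots,y_{m-1}\}=\mathrm{span}(Y)$ (using $y_0=0$), and conclude by rank--nullity. Your explicit remark that the linear and affine spans coincide thanks to $y_0=0$ is a point the paper leaves implicit, but the argument is the same.
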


\begin{proof}
Let $y_i=(y_{i1},y_{i2},\ldots ,y_{id})$ for $1\leq i\leq m-1$ and let $x=(x_1,x_2,\ldots ,x_d)$. 
For $1\leq i \leq m-1$, $d(y_i,x)=d(y_0,x)$ implies 
$$\sum _{k=1}^d y_{i\, k}x_k=\frac{1}{2}\sum _{k=1}^d {y_{i\, k}}^2.$$
Therefore
$$\Sol(Y)=\left\{x\in \mathbb{R}^d|
\begin{pmatrix}
y_{1\, 1}&y_{1\, 2}&\cdots &y_{1\, d}\\
y_{2\, 1}&y_{2\, 2}&\cdots &y_{2\, d}\\
\vdots &\vdots &\ddots &\vdots \\
y_{m-1\, 1}&y_{m-1\, 2}&\cdots &y_{m-1\, d}
\end{pmatrix}
\begin{pmatrix}
x_1\\x_2\\ \vdots \\ x_d
\end{pmatrix}
=
\begin{pmatrix}
c_1\\c_2\\ \vdots \\ c_d
\end{pmatrix}\right\}
$$
where 
$$c_i=\frac{1}{2}\sum _{k=1}^d{y_{i\, k}}^2.$$
Since the rank of the above matrix is $\dim(Y)$, $\Sol(Y)$ is contained in a $(d-\dim(Y))$-dimensional subspace 
if $\Sol(Y)\ne \emptyset$. 
\end{proof}

By Lemma \ref{dimension}, the following lemma holds. 

\begin{lem}\label{complement}
Let $X$ be a locally two-distance set in $\mathbb{R}^d$. 
Let $Y\subset X$ be a saturated subset and $\dim(Y)=i$. 
Then $X\setminus Y$ is a locally two-distance set with $\dim(X\setminus Y)\leq d-i$. 
\end{lem}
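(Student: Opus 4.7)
The plan is to exploit the maximality inherent in the definition of a saturated subset $Y=\{y_1,\ldots,y_m\}$ (with common distance set $A_X(y_j)=\{\alpha,\beta\}$) to show that every $x\in X\setminus Y$ must be equidistant from all points of $Y$, and then to apply Lemma \ref{dimension} directly.

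First I would verify the equidistance claim. Fix $x\in X\setminus Y$. For each $y_j\in Y$, the distance $d(x,y_j)$ belongs to $A_X(y_j)=\{\alpha,\beta\}$, so the set of distances from $x$ to points of $Y$ is a subset of $\{\alpha,\beta\}$. Suppose for contradiction that both values $\alpha$ and $\beta$ are realized; then $A_X(x)\supseteq\{\alpha,\beta\}$, and since $X$ is locally two-distance we have $|A_X(x)|\leq 2$, forcing $A_X(x)=\{\alpha,\beta\}$. But then $Y\cup\{x\}$ is a strictly larger subset in which every element still has distance set exactly $\{\alpha,\beta\}$, contradicting the maximality of $Y$. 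Hence all distances from $x$ to $Y$ coincide, i.e., $x\in\Sol(Y)$.

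Having established $X\setminus Y\subseteq\Sol(Y)$, Lemma \ref{dimension} immediately yields $\dim(X\setminus Y)\leq d-\dim(Y)=d-i$ when $X\setminus Y\ne\emptyset$ (and the case $X\setminus Y=\emptyset$ is vacuous). Since $X\setminus Y\subseteq X$, the bound $|A_{X\setminus Y}(x)|\leq|A_X(x)|\leq 2$ holds automatically, so $X\setminus Y$ inherits the locally two-distance property.

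I do not expect any serious obstacle; the only substantive step is the short contradiction argument that uses maximality of $Y$ together with the global bound $|A_X(x)|\leq 2$, and the remainder is a direct appeal to Lemma \ref{dimension} combined with the trivial observation that subsets of a locally two-distance set are again locally two-distance.
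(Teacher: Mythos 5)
Your proof is correct and follows exactly the route the paper intends: the maximality of the saturated subset forces every $x\in X\setminus Y$ to be equidistant from all of $Y$ (the paper states this in the paragraph preceding Lemma \ref{dimension}), and then Lemma \ref{dimension} gives the dimension bound. Your write-up just makes explicit the short contradiction argument that the paper leaves implicit.
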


\begin{rem}\label{dim for proper}
Let $X$ be a locally two-distance set and $Y$ be a saturated subset of $X$ in $\mathbb{R}^d$. 
Then we have $\dim (Y)\ne 0$ by Lemma \ref{lemdr}. 
Moreover, if $\dim (Y)=d$, then $\dim (X\setminus Y)=0$ by Lemma \ref{complement}. 
In this case, $|X\setminus Y|\leq 1$ and $X$ is a two-distance set. 
Therefore  $1\leq \dim (Y)\leq d-1$ for every saturated subset $Y$ of a proper locally two-distance set $X$ in $\mathbb{R}^d$. 
Moreover all points in $Y$ are on a common sphere since $X\setminus Y\ne \emptyset$. 
\end{rem}

From the above remark, we have an upper bound for the cardinality of a proper locally two-distance set. 

\begin{thm}\label{max} Let $X$ be a proper locally two-distance set in $\mathbb{R}^d$. Then 
$$|X|\leq f(d)$$
where
$$f(d)=\max _{1\leq i \leq d-1}\{DS^{\ast}_i(2)+LDS_{d-i}(2) \}.$$
In particular, 
$$LDS_d(2)\leq \max \{DS_d(2), f(d)\}$$
\end{thm}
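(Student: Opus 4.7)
The plan is to split any proper locally two-distance set $X \subset \mathbb{R}^d$ into a \emph{saturated} subset $Y$ and its complement, and to use the machinery already set up: $Y$ will be a two-distance set on a sphere (bounded by $DS^{\ast}_{\dim Y}(2)$), while $X \setminus Y$ will be a locally two-distance set lying inside the lower-dimensional affine space $\Sol(Y)$ (bounded by $LDS_{d-\dim Y}(2)$). Summing the two estimates for $i = \dim Y$ produces one of the terms whose maximum defines $f(d)$.

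The execution goes as follows. First I may assume $|X| \geq d+2$: the residual case $|X| \leq d+1$ is absorbed by $f(d)$, since already the $i=1$ term satisfies $DS^{\ast}_1(2) + LDS_{d-1}(2) \geq 2 + d$ for $d \geq 2$ (the case $d=1$ is trivial). Lemma \ref{lemdr}(i) then produces a saturated subset $Y \subseteq X$; let $i = \dim Y$. By the definition of saturation, $Y$ is a two-distance set. Since $X$ is proper, $Y \neq X$ (otherwise every point of $X$ would share a common distance pair $\{\alpha,\beta\}$, forcing $X$ itself to be a two-distance set), so $X \setminus Y$ is nonempty. Remark \ref{dim for proper} then supplies $1 \leq i \leq d-1$ together with the crucial fact that the points of $Y$ lie on a common sphere, whence $|Y| \leq DS^{\ast}_i(2)$. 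Lemma \ref{complement} gives the containment $X \setminus Y \subseteq \Sol(Y)$, an affine subspace of dimension at most $d-i$, so $|X \setminus Y| \leq LDS_{d-i}(2)$. Adding these yields
\[
|X| \;=\; |Y| + |X \setminus Y| \;\leq\; DS^{\ast}_i(2) + LDS_{d-i}(2) \;\leq\; f(d).
\]

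The ``in particular'' clause is immediate: an arbitrary locally two-distance set in $\mathbb{R}^d$ is either a two-distance set (bounded by $DS_d(2)$) or proper (bounded by $f(d)$), so $LDS_d(2) \leq \max\{DS_d(2),\, f(d)\}$. The only delicate point in the argument is the spherical bound $|Y| \leq DS^{\ast}_i(2)$: it requires not merely that $Y$ span an $i$-dimensional affine subspace, but that its points genuinely lie on a sphere. This is exactly what saturation combined with the existence of a point in $X \setminus Y$ equidistant from all of $Y$ gives us, and it is the reason properness of $X$ is essential; the rest of the proof is a direct assembly of the preceding lemmas.
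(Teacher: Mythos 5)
Your proof is correct and follows essentially the same route as the paper's: pick a saturated subset $Y$, use Remark \ref{dim for proper} to get $1\leq \dim(Y)\leq d-1$ and that $Y$ lies on a common sphere (hence $|Y|\leq DS^{\ast}_{\dim Y}(2)$), bound $|X\setminus Y|$ by $LDS_{d-\dim Y}(2)$ via Lemma \ref{complement}, and add. Your extra care about the existence of a saturated subset when $|X|\leq d+1$ is a reasonable tidying-up that the paper handles only implicitly (cf.\ Remark \ref{lower}).
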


\begin{proof}
Let $X$ be a proper locally two-distance set in $\mathbb{R}^d$ and 
$Y$ be a saturated subset of $X$ and $i=\dim(Y)$. 
Then $1\leq i\leq d-1$ and all points in $Y$ are on a common sphere by Remark \ref{dim for proper}, so $|Y|\leq DS^{\ast}_i(2)$. 
On the other hand, $|X\setminus Y|\leq LDS_{d-i}(2)$ by Lemma \ref{complement}.
Therefore $|X|\leq DS^{\ast}_i(2)+LDS_{d-i}(2)\leq f(d)$.  
\end{proof}

\begin{cor}\label{width}
Every locally two-distance set in $\mathbb{R}^d$ with at least $d(d+1)/2+3$ points 
is a two-distance set. 
In particular $LDS_d(2)\leq \binom{d+2}{2}$. 
\end{cor}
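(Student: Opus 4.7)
The plan is to induct on $d$, proving both assertions together and applying Theorem \ref{max} in the proper case. First I would dispose of the base case $d=1$: any locally two-distance set in $\mathbb{R}^1$ has at most three points (a fourth collinear point would produce a third distance at one of the endpoints), so the first assertion is vacuous and $LDS_1(2)\leq 3=\binom{3}{2}$.

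For the inductive step, I assume the corollary in every dimension $d'<d$ and let $X\subset \mathbb{R}^d$ be a locally two-distance set. If $X$ is itself a two-distance set, then $|X|\leq DS_d(2)\leq \binom{d+2}{2}$ by the Bannai-Bannai-Stanton/Blokhuis bound recalled in the introduction. Otherwise $X$ is proper, and Theorem \ref{max} yields
\[
|X|\leq \max_{1\leq i\leq d-1}\bigl(DS_i^{\ast}(2)+LDS_{d-i}(2)\bigr);
\]
I would bound the summands using the Fisher-type inequality $DS_i^{\ast}(2)\leq N_i(2)=i(i+3)/2$ and the inductive hypothesis $LDS_{d-i}(2)\leq \binom{d-i+2}{2}$.

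After substitution, the right-hand side is bounded by the quadratic
\[
g(i):=\frac{i(i+3)}{2}+\frac{(d-i+1)(d-i+2)}{2}=i^2-di+\frac{d^2+3d+2}{2},
\]
which is strictly convex in $i$ and so attains its maximum on $[1,d-1]$ at an endpoint; a direct computation gives $g(1)=g(d-1)=d(d+1)/2+2$. Consequently every proper locally two-distance set in $\mathbb{R}^d$ has at most $d(d+1)/2+2$ points, which is strictly less than $d(d+1)/2+3$; this proves the first assertion. Combining with the two-distance case above and the trivial inequality $d(d+1)/2+2\leq \binom{d+2}{2}$ (valid for all $d\geq 1$), we conclude $LDS_d(2)\leq \binom{d+2}{2}$, closing the induction.

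There is no substantive obstacle: the argument is a simple induction whose heart is the quadratic optimization above. The only mild subtlety is that the range $1\leq i\leq d-1$ in Theorem \ref{max} is empty for $d=1$, which is why the base case must be handled outside the induction machinery.
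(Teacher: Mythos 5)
Your proof is correct and follows essentially the same route as the paper: apply Theorem \ref{max} to a proper locally two-distance set, bound $DS_i^{\ast}(2)$ by the Fisher-type bound $N_i(2)=i(i+3)/2$ and $LDS_{d-i}(2)$ by the inductive hypothesis, and maximize the resulting convex quadratic over $1\leq i\leq d-1$ to get $d(d+1)/2+2$. The only (welcome) difference is that you anchor the induction cleanly at $d=1$ and make explicit the use of $DS_d(2)\leq\binom{d+2}{2}$ in the non-proper case, whereas the paper defers its base case to a forward reference.
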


\begin{proof}
Let $X$ be a proper locally two-distance set in $\mathbb{R}^d$. 
As we will see in Proposition \ref{2dr}, $LDS_d(2)\leq \binom{d+2}{2}$ for small $d$. 
Assume $LDS_i(2)\leq \binom{i+2}{2}$ for any $i\leq d-1$. 
By Theorem \ref{max}, 
\begin{align*}
|X|&\leq \max _{1\leq i\leq d-1}\{DS^{\ast}_i(2)+LDS_{d-i}(2)\} \\
&\leq \max _{1\leq i\leq d-1}\biggl\{\frac{i^2+3i}{2}+\frac{(d-i+2)(d-i+1)}{2} \biggr\}\\
&=\frac{1}{2}\max _{1\leq i\leq d-1}\{2i^2-2di+d^2+3d+2\}\\
&=\frac{d(d+1)}{2}+2\\
\end{align*}
Therefore this corollary holds. 
\end{proof}

\begin{rem}
(i) Since the set of midpoints of a regular simplex in $\mathbb{R}^d$ is a two-distance set with $d(d+1)/2$ points, 
Corollary \ref{width} implies $DS_d(2)\leq LDS_d(2)\leq DS_d(2)+2$. 
For $d\leq 8$, $d\ne 3$, we will see that $DS_d(2)=LDS_d(2)$ in Proposition \ref{2dr}.  \\
(ii) For spherical cases, similarly we have $DS^{\ast}_d(2)\leq LDS^{\ast}_d(2)\leq DS^{\ast}_d(2)+1$. 
\end{rem}

\begin{prob}\label{problem1}
When does $DS_d(2)<LDS_d(2)$ (resp.\ $DS^{\ast}_d(2)<LDS^{\ast}_d(2)$) hold?
\end{prob}

We will give partial results for general cases in Section \ref{sec partial} and give an answer for $d\leq 8$ in Section \ref{sec ans}. 

\subsection{Partial answer to Problem \ref{problem1}}\label{sec partial}
\begin{lem} \label{partial lem1}
$(\mathrm{i})$ Let $X$ be a proper locally two-distance set in $\mathbb{R}^d$ for $d\geq 3$. 
If $d(d+1)/2<|X|$, then there exist $N_{d-1}(2)$-point two-distance set in $S^{d-2}$ 
or $(N_{d-1}(2)-1)$-point two-distance set $Y$ in $S^{d-2}$ with $A(Y)=\{1, 2/\sqrt{3}\}$. \\
$(\mathrm{ii})$ Let $X$ be a proper locally two-distance set in $S^{d-1}$ for $d\geq 3$. 
If $d(d+1)/2<|X|$, then there exist $N_{d-1}(2)$-point two-distance set $Y$ in 
$S^{d-2}$ with $\sqrt{2} \in A(Y)$ or $A(Y)=\{\alpha, \alpha/\sqrt{\alpha^2-1}\}$. 
\end{lem}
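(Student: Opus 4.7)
\emph{Proof plan.} Both parts follow the same outline: extract from $X$ a saturated subset $Y$ of dimension $d-1$ and cardinality close to the Fisher bound $N_{d-1}(2)$, then read off the two distances of $Y$ from the (few) points of $X$ lying in the one-dimensional complement $\Sol(Y)$.

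\emph{Part (i).} By Lemma \ref{lemdr} let $Y\subseteq X$ be a saturated subset and put $i=\dim Y$; Remark \ref{dim for proper} gives $1\leq i\leq d-1$. I would argue $i=d-1$ by combining Theorem \ref{max}, the Fisher bound $DS^{\ast}_i(2)\leq N_i(2)=i(i+3)/2$, and Corollary \ref{width} $LDS_j(2)\leq\binom{j+2}{2}$ to get
\[
|X|\;\leq\;\tfrac{1}{2}\bigl(2i^2-2di+d^2+3d+2\bigr),
\]
which, as a quadratic in $i$, is $\leq d(d+1)/2$ for $2\leq i\leq d-2$ once $d\geq 5$. The endpoint $i=1$ is excluded by the sharper $DS^{\ast}_1(2)=2$ (forcing $|X|\leq 2+LDS_{d-1}(2)\leq (d-1)d/2+4<d(d+1)/2$ for $d\geq 4$), and the small cases $d\in\{3,4\}$ are handled by hand, using for example for $d=3$ the observation that if the only saturated subset had dimension $1$ then $X\setminus Y$ would be a $5$-point two-distance set in a $2$-dimensional affine subspace and hence itself a saturated subset of $X$ of dimension $d-1$. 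Once $i=d-1$, $\Sol(Y)$ is an affine line (Lemma \ref{dimension}), so $|X\setminus Y|\leq LDS_1(2)=3$ and
\[
|Y|\;\geq\;|X|-3\;>\;\tfrac{d(d+1)}{2}-3\;=\;N_{d-1}(2)-2.
\]
Rescaling the concentric sphere containing $Y$ (Remark \ref{dim for proper}) to the unit $S^{d-2}$ and applying Fisher gives $|Y|\in\{N_{d-1}(2)-1,N_{d-1}(2)\}$. The tight case is the first alternative. In the near-tight case, the three collinear auxiliary points in $\Sol(Y)\cap X$ together with the locally two-distance compatibility at every $y\in Y$ impose enough constraints on the Gegenbauer expansion $F_Y(t)=(t-\alpha)(t-\beta)=\sum_{i}f_iG^{(d-1)}_i(t)$ to pin down uniquely $A(Y)=\{1,\,2/\sqrt{3}\}$.

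\emph{Part (ii).} The spherical version runs in parallel, using $LDS^{\ast}_j(2)\leq DS^{\ast}_j(2)+1$ in the dimensional reduction, after which the same quadratic argument gives $\dim Y=d-1$. Now $\Sol(Y)\cap S^{d-1}$ has at most two points, so $|X\setminus Y|\leq 2$ and the stronger inequality
\[
|Y|\;>\;\tfrac{d(d+1)}{2}-2\;=\;N_{d-1}(2)-1
\]
forces $|Y|=N_{d-1}(2)$. To identify the distances I would choose coordinates so that the hyperplane of $Y$ is $\{x_d=h\}$; one then checks that the sub-sphere has radius $r=\sqrt{1-h^2}$ and
\[
d(\pm e_d,y)=\sqrt{2\mp 2h}\qquad\text{for every }y\in Y.
\]
If only one of $\pm e_d$ lies in $X\setminus Y$, compatibility with $Y$ being a tight spherical four-design on $S^{d-2}$ forces $h=0$ and hence $\sqrt 2\in A(Y)$. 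If both $\pm e_d$ lie in $X\setminus Y$, both $\sqrt{2\mp 2h}$ must appear in $A(Y)$, and after rescaling by $1/r$ the two distances $\alpha=\sqrt{2/(1+h)}$ and $\beta=\sqrt{2/(1-h)}$ satisfy $1/\alpha^2+1/\beta^2=1$, equivalently $\beta=\alpha/\sqrt{\alpha^2-1}$.

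\emph{Main obstacle.} The delicate step in both parts is the distance identification. In part (i), deducing $A(Y)=\{1,2/\sqrt{3}\}$ from the near-tight structure requires careful bookkeeping with the Gegenbauer coefficients (exactly one positivity constraint drops out relative to the tight case, and the three collinear extra points fix the remaining freedom). In part (ii), the trigonometry above settles the two-auxiliary-point case directly, but ruling out the "mixed" case of a single auxiliary point with $h\neq 0$ still requires verifying that no tight four-design on $S^{d-2}$ with a single prescribed non-$\sqrt 2$ distance can be extended to a proper locally two-distance set of size exceeding $d(d+1)/2$.
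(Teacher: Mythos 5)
Your outline coincides with the paper's: choose a saturated subset $Y$, force $\dim Y=d-1$ by the counting of Theorem \ref{max}, bound $|X\setminus Y|$ by $LDS_1(2)=3$ (resp.\ $LDS^{\ast}_1(2)=2$), and read the distances off the auxiliary points on the line $\Sol(Y)$. Part (ii) is essentially right and even more explicit than the paper's written proof (which stops at $|Y|=N_{d-1}(2)$); note that since $|X|\geq N_{d-1}(2)+2$ and $|Y|\leq N_{d-1}(2)$ force $|X\setminus Y|=2$ exactly, your ``single auxiliary point'' subcase is vacuous and the obstacle you flag there disappears. Part (i), however, has two genuine gaps. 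The first is your exclusion of $\dim Y=1$: the inequality $LDS_{d-1}(2)\leq (d-1)d/2+2$ is false, because Corollary \ref{width} only caps \emph{proper} locally two-distance sets, while $LDS_{d-1}(2)\geq DS_{d-1}(2)$ can exceed $(d-1)d/2+2$ (e.g.\ $DS_6(2)=27>23$ and $DS_8(2)=45>38$), so for $d=7$ your bound $2+LDS_6(2)<28$ fails. The paper closes this case by choosing $Y$ of \emph{maximum} cardinality among saturated subsets: if $\dim Y=1$ then $|Y|\leq 2$, so $X\setminus Y$ is a locally two-distance set in $\mathbb{R}^{d-1}$ with more than $(d-1)d/2+2$ points, hence a two-distance set by Corollary \ref{width}, hence by Lemma \ref{lemdr} it contains a saturated subset strictly larger than $Y$, a contradiction. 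You never impose maximality, so this case is not closed by your argument.

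The second gap is the identification $A(Y)=\{1,2/\sqrt{3}\}$ when $|Y|=N_{d-1}(2)-1$. Deferring it to ``bookkeeping with the Gegenbauer coefficients'' points at the wrong tool and leaves the step undone: the Gegenbauer expansion of $F_Y$ sees only the inner products of $Y$ and cannot determine them. The paper's derivation is elementary geometry of the three collinear points $x_1,x_2,x_3\in\Sol(Y)$: since $Y$ is saturated, each $x_i$ is equidistant from all of $Y$ with common distance $\alpha$ or $\beta$, and $A_X(x_i)\neq\{\alpha,\beta\}$; up to relabelling $d(x_1,y)=d(x_2,y)=\alpha$, $d(x_3,y)=\beta$, $d(x_1,x_2)=\alpha$, and $d(x_1,x_3)=d(x_2,x_3)=\gamma\notin\{\alpha,\beta\}$, so $x_3$ is the midpoint of $x_1x_2$, i.e.\ the centre of the sphere carrying $Y$; thus $\beta$ is its radius, $x_1$ sits at height $\alpha/2$ above the hyperplane of $Y$, and $\alpha^2=(\alpha/2)^2+\beta^2$ gives $\alpha=2\beta/\sqrt{3}$, which is $\{1,2/\sqrt{3}\}$ after normalising the radius. (Corollary \ref{musin} enters only afterwards, in Theorem \ref{partial ans}, to rule this configuration out.)
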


\begin{proof}
(i) For the case where $d\in\{3,4\}$, we will prove this proposition directly in Proposition \ref{2dr}. 
Therefore we assume that $d\geq 5$ in this proof. 
Let $X$ be a proper locally two-distance set in $\mathbb{R}^d$ with more than $d(d+1)/2$ points and let $Y$ be a saturated subset of $X$. 
We may assume that $Y$ has maximum cardinality among saturated subsets of $X$.  
Let $i=\dim(Y)$. Then $1\leq i\leq d-1$ since $Y$ is a saturated subset and $X$ is not a two-distance set. 
If $2\leq i\leq d-2$, then $d(d+1)/2\geq |X|$ for $d\geq 5$ by Theorem \ref{max}. 
Moreover if $i=1$, then $|Y|\leq 2$ and $|X\setminus Y|\geq d(d+1)/2-2>d(d-1)+3$ for $d\geq 3$. 
Since $X\setminus Y$ is a locally two-distance set in $\mathbb{R}^{d-1}$, $X\setminus Y$ is a two-distance set by Corollary \ref{width}. 
By Lemma \ref{lemdr}, $X\setminus Y$ contains a saturated subset $Y'$ and $|Y'|> |Y|$. 
This is a contradiction to the assumption. Therefore $i=d-1$. 
Since $|X|\geq d(d+1)/2+1=N_{d-1}(2)+2$ and $|X\setminus Y|\leq LDS_1(2)=3$, $|Y|\geq N_{d-1}(2)-1$.  
It is enough to consider the case $|Y|=N_{d-1}(2)-1$, otherwise $|Y|=N_{d-1}(2)$ and this proposition holds. In this case, $|X\setminus Y|=3$. 
Let $A(Y)=\{\alpha , \beta\}$ and $X\setminus Y=\{x_1, x_2, x_3\}$. 
For any $i\in\{1,2,3\}$, $A(x_i)\ne \{\alpha ,\beta\}$ since $Y$ is a saturated subset.  
Moreover $d(x_i,y)=\alpha $ for all $y\in Y$ or $d(x_i,y)=\beta $ for all $y\in Y$. 
Since $\dim(X\setminus Y)=1$, there are four possibilities for the $x_i$. 
Without loss of generality, we may assume $d(x_1,y)=d(x_2,y)=\alpha$ for all $y\in Y$ 
and $d(x_3,y)=\beta$ for all $y\in Y$. 
Then $d(x_1,x_3)=d(x_2,x_3)=\gamma $ for $\gamma \notin \{\alpha ,\beta\}$ and $d(x_1,x_2)=\alpha $. 
It follows from these conditions that $Y$ is an $(N_{d-1}(2)-1)$-point two-distance set $Y$ 
in $S^{d-2}$ with $A(Y)=\{1, 2/\sqrt{3}\}$. 

\noindent 
(ii) Let $X$ be a proper locally two-distance set in $S^{d-1}$ with more than $d(d+1)/2$ points and let $Y$ be a saturated subset of $X$. 
Similar to the above case, we may assume $i=\dim(Y)=d-1$. 
Since $|X|\geq N_{d-1}(2)+2$ and $|X\setminus Y|\leq LDS^{\ast}_1(2)=2$, $|Y|\geq N_{d-1}(2)$. 
Therefore, $|Y|=N_{d-1}(2)$. 
\end{proof}

\begin{thm} \label{partial ans}
$(\mathrm{i})$ If there exists a proper locally two-distance set $X$ in  $\mathbb{R}^d$ with more than $d(d+1)/2$ points, 
then there exists an $N_{d-1}(2)$-point two-distance set in $S^{d-2}$. \\
$(\mathrm{ii})$ If there exists a proper locally two-distance set $X$ in  $S^{d-1}$ with more than $d(d+1)/2$ points, 
then there exists an $N_{d-1}(2)$-point two-distance set in $S^{d-2}$. 
In particular, a locally two-distance set in $S^{d-1}$ with more than $d(d+1)/2$ points is a subset of a tight spherical five-design. 
\end{thm}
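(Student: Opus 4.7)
The plan is to derive both parts from Lemma \ref{partial lem1}, eliminating the exceptional cases with Corollary \ref{musin} and tracing the geometric structure set up in the lemma's proof.

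For part (i), the first case of Lemma \ref{partial lem1}(i) already produces the desired $N_{d-1}(2)$-point set. I rule out the alternative---an $(N_{d-1}(2)-1)$-point two-distance set $Y\subset S^{d-2}$ with $A(Y)=\{1, 2/\sqrt{3}\}$---by applying Corollary \ref{musin}. The distances $1$ and $2/\sqrt{3}$ correspond to the positive inner products $1/2$ and $1/3$, so Corollary \ref{musin} yields $|Y|\leq \binom{d}{2}=(d^2-d)/2$. But $|Y|=N_{d-1}(2)-1=(d^2+d-4)/2>\binom{d}{2}$ for every $d\geq 3$, a contradiction.

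For part (ii), the existence assertion is immediate from Lemma \ref{partial lem1}(ii). For the ``In particular'' claim, applied to the proper case, I track the geometry from the lemma's proof: the saturated subset $Y\subset X$ has $\dim(Y)=d-1$ and $|Y|=N_{d-1}(2)$, so $Y$ lies on the $(d-2)$-sphere $V\cap S^{d-1}$ (where $V$ is the affine hull of $Y$) and, after rescaling, is a tight spherical four-design on $S^{d-2}$. The complement $X\setminus Y$ lies on the line perpendicular to $V$ through the origin and on $S^{d-1}$, hence at the two antipodal poles $\pm p$; combined with $|X|>N_{d-1}(2)+1$ and $|X\setminus Y|\leq 2$, this yields $X\setminus Y=\{p,-p\}$.

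The candidate ambient tight five-design is $Z:=Y\cup(-Y)\cup\{p,-p\}$, which contains $X$ and has $|Z|=2N_{d-1}(2)+2=d(d+1)=N_d'(3)$ points. I verify $Z$ is an antipodal three-distance set, which forces it to be a tight spherical five-design by Delsarte--Goethals--Seidel. Let $h$ denote the signed distance from the origin to $V$; this is nonzero, because otherwise $Y$ would be a tight four-design on the equatorial $S^{d-2}$ with inner product $0$ among its distances, and solving the Fisher attainment equation under that constraint forces $d\leq 2$. With $h\neq 0$, the locally two-distance condition at any $y\in Y$ requires $\{\sqrt{2-2h},\sqrt{2+2h}\}\subseteq\{\alpha_1,\alpha_2\}$, and since these pole-distances are distinct, $\{\alpha_1,\alpha_2\}=\{\sqrt{2-2h},\sqrt{2+2h}\}$, giving the key identity $\alpha_1^2+\alpha_2^2=4$. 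A brief computation using $d(y,-y')^2=4-d(y,y')^2$ then yields $A(Z)=\{\alpha_1,\alpha_2,2\}$. The main difficulty is establishing the identity $\alpha_1^2+\alpha_2^2=4$, which is the essential geometric consequence of the local two-distance condition that makes $Z$ actually a tight spherical five-design.
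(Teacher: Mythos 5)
Your proposal is correct and follows the paper's own route: part (i) is exactly the paper's argument (invoke Lemma \ref{partial lem1}(i) and kill the $(N_{d-1}(2)-1)$-point exceptional case via Corollary \ref{musin}, since the inner products $1/2$ and $1/3$ sum to a nonnegative number while $(d^2+d-4)/2>\binom{d}{2}$ for $d\geq 3$). For part (ii) the paper simply writes ``clear by Lemma \ref{partial lem1}(ii) and Remark \ref{tight rem}''; your explicit construction of $Z=Y\cup(-Y)\cup\{p,-p\}$ as an antipodal three-distance set with $N_d'(3)$ points, including the identities $h\neq 0$ and $\alpha_1^2+\alpha_2^2=4$, is precisely the content that remark is gesturing at, worked out in full.
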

\begin{proof}
(i) Let $X$ be a proper locally two-distance set in $\mathbb{R}^d$ with more than $d(d+1)/2$ points. 
We assume that $X$ does not contain $N_{d-1}(2)$-point two-distance set in $S^{d-2}$. 
Then $X$ contains ($N_{d-1}(2)-1$)-point two-distance set $Y\subset S^{d-2}$ with $A(Y)=\{1, 2/\sqrt{3}\}$ 
by Lemma \ref{partial lem1}(i) . 
However there does not exist such a two-distance set $Y$ by Corollary \ref{musin}. \\
(ii) This is clear by Lemma \ref{partial lem1} (ii) and Remark \ref{tight rem}. 
\end{proof}

\begin{rem} 
Since $ d(d+1)/2 \leq DS_d(2) $ (resp.\ $ d(d+1)/2 \leq DS^\ast _d(2) $), the assumption in Theorem \ref{partial ans} (i) (resp.\  (ii)) 
can be replaced by $ DS_d(2) < LDS_d(2) $ (resp.\ $ DS^\ast _d(2) < LDS^\ast _d(2) $). 
\end{rem}
 
\subsection{Classifications of optimal two-distance sets} 

\noindent 
{\it Euclidean cases}\quad $DS_d(2)$ is determined for $d\leq 8$ and optimal two-distance sets are classified for $d\leq 7$ 
(Kelly \cite{Kelly}, Croft \cite{Croft}, Einhorn-Schoenberg \cite{Ein2} and Lison\v{e}k \cite{Liso}). 
We introduce the results in this subsection. \\

\noindent 
$d=2$: $DS_2(2)$ and the optimal planar two-distance set is isomorphic to the set of vertices of  a regular pentagon 
(Kelly \cite{Kelly}, Einhorn-Schoenberg \cite{Ein2}). 
We denote the set of vertices of the regular pentagon with side length 1 by $R_5$. 
Then $A(R_5)=\{1, \tau\}$ where $\tau =(1+\sqrt{5})/2$. \\

\noindent
$d=3$: $DS_3(2)$ and there are exactly six optimal two distance sets in $\mathbb{R}^3$ 
(Croft \cite{Croft}, Einhorn-Schoenberg \cite{Ein2}). 
They are the set of vertices of a regular octahedron, a right prism which has a equilateral triangle base and square sides 
and the remaining four sets are subsets of a regular icosahedron.\\

\noindent
$d=4$:  $DS_4(2)=10$ and the optimal two-distance set in $\mathbb{R}^4$ is isomorphic to the set of midpoints of 
the edges of a regular simplex in $\mathbb{R}^4$. This set corresponds to the Petersen graph.\\

\noindent 
$d=5$: $DS_5(2)=16$ and the optimal two-distance set in $\mathbb{R}^5$ is isomorphic to the set given by the Clebsch graph. 
Points of the set are given by the following. 

$$-{e_i}+\sum _{k=1}^5 {e_k} \quad (1\leq i\leq 5),$$
9
$${e_i}+{e_j} \quad (1\leq i<j\leq 5)$$

and the origin $O$ of $\mathbb{R}^5$.\\

\noindent 
$d=6$: $DS_6(2)=27$ and the optimal two-distance set in $\mathbb{R}^6$ is isomorphic to the set obtained from the Schl\"afli graph. \\

\noindent 
$d=7$: $DS_7(2)=29$ and the optimal two-distance set in $\mathbb{R}^7$ is isomorphic to the set which is given by the following points. 

$$-{e_i}+ \frac{1}{7}(3+\sqrt{2})\sum_{k=1}^{7}{e_k} \quad (1\leq i\leq 7),$$ 
$${e_i}+{e_j} \quad (1\leq i<j\leq 7)$$ 
and
$$\frac{1}{7}(2+3\sqrt{2})\sum_{k=1}^7{e_k}.$$

\noindent 
$d=8$: A two-distance set in $\mathbb{R}^8$ with $\binom{10}{2}=45$ points is known. 
Let 
$$X_1=\{e_i -\frac{1}{12}\sum _{k=1}^8e_k | i=1,2,\ldots 8\}\cup \{-\frac{1}{3}\sum _{k=1}^8e_k\}$$ 
and
$$X_2=\{-(x+y) |  x, y\in X_1, x\ne y \}$$ \\
Then $X_1$ is the vertex set of a regular simplex and $X_1\cup X_2$ is a two-distance set with $A(X_1\cup X_2)=\{\sqrt{2}, 2\}$\\

\noindent 
{\it Spherical cases}\quad For $2\leq d\leq 6$, every optimal two-distance set in $\mathbb{R}^d$ is on a sphere. 
Optimal two-distance sets in $S^6$ are given from three Chang graphs or the set of midpoints of edges of a regular simplex in $\mathbb{R}^7$. 
Moreover, Musin \cite{Musin} determined $DS^\ast _d(2)$ for $7\leq d< 40$. 

\begin{thm}\label{spherical 2ds}
$DS^\ast _d(2)=d(d+1)/2$ for the cases where $7\leq d\leq 21,24\leq d< 40$.  
When $d=22,23$, $DS^\ast _{22}(2)=275$ and $DS^\ast _{23}(2)=276$ or $277$. 
\end{thm}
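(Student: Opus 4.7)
The plan is to combine the polynomial bound from Corollary \ref{musin} with the classical rigidity analysis that applies when that bound fails. Let $X \subset S^{d-1}$ be a two-distance set with inner-product set $A_{\textup{inn}}(X) = \{\alpha,\beta\}$, $\alpha < \beta$. First I would dispose of the case $\alpha + \beta \geq 0$: Corollary \ref{musin} immediately gives $|X| \leq h_0 + h_2 = d(d+1)/2$, and the midpoints of the edges of a regular $d$-simplex (Example \ref{mid regular}) realize this bound, yielding $DS^\ast_d(2) = d(d+1)/2$ in that case. So the remaining work concerns the case $\alpha + \beta < 0$.

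When $\alpha + \beta < 0$, the classical Larman--Rogers--Seidel argument implies that as soon as $|X| > 2d + 3$ the ratio
\[
k \;=\; \frac{1 - \beta}{\beta - \alpha}
\]
must be an integer $\geq 2$, and the graph on $X$ joining pairs at inner product $\alpha$ is strongly regular with parameters determined by $d$ and $k$. Imposing integrality of the eigenvalue multiplicities together with the Krein and absolute bounds on strongly regular graphs, one obtains a short finite list of admissible triples $(d, k, |X|)$ with $|X| > d(d+1)/2$. A direct check over $7 \leq d < 40$ eliminates every such triple except at $d = 22$ and $d = 23$.

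For the two exceptional dimensions, the McLaughlin configuration furnishes a $275$-point two-distance set on $S^{21}$; a matching uniqueness argument for the associated strongly regular graph gives $DS^\ast_{22}(2) = 275$. For $d = 23$, extending the McLaughlin set by one carefully chosen unit vector produces a $276$-point configuration, while the integrality/feasibility analysis only rules out sizes $\geq 278$, leaving $DS^\ast_{23}(2) \in \{276, 277\}$ open.

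The main obstacle is the $\alpha + \beta < 0$ case: for each dimension in $\{7, \dots, 39\}$ one must systematically rule out every feasible strongly regular graph parameter set yielding more than $d(d+1)/2$ points, and this arithmetic enumeration is delicate precisely at $d = 22, 23$, where genuine exceptional configurations (McLaughlin and its one-point extension) actually occur. The lower bound $DS^\ast_d(2) \geq d(d+1)/2$ is immediate from Example \ref{mid regular}, so essentially all the difficulty is concentrated in the upper bound analysis via strongly regular graphs.
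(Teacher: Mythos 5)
First, note that the paper does not prove this statement at all: Theorem \ref{spherical 2ds} is quoted from Musin \cite{Musin} (``Musin determined $DS^\ast_d(2)$ for $7\leq d<40$''), so there is no internal proof to match. Your first step is nevertheless faithful to Musin's actual argument: when $\alpha+\beta\geq 0$, Corollary \ref{musin} gives $|X|\leq h_0+h_2=d(d+1)/2$, and Example \ref{mid regular} supplies the matching lower bound. That part is fine.

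The gap is in your treatment of $\alpha+\beta<0$. You assert that once $|X|>2d+3$ the graph on $X$ joining pairs at inner product $\alpha$ is \emph{strongly regular}, and you then propose to enumerate feasible parameter sets via multiplicity integrality, Krein conditions, and the absolute bound. But a spherical two-distance set does not in general produce a strongly regular graph: the Gram matrix lies in the span of $I$, $A$, and $J$ and has rank at most $d$, which constrains the spectrum of $A+cJ$ for one value of $c$ but does not force $A$ to be regular (any subset of a two-distance set is again a two-distance set, and induced subgraphs of strongly regular graphs are generally not strongly regular). So the SRG feasibility enumeration you describe cannot be carried out, and the claimed ``short finite list of admissible triples'' has no basis. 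What Larman--Rogers--Seidel actually give for $|X|\geq 2d+4$ is only the integrality of $k=(1-\beta)/(\beta-\alpha)$ together with the bound $k\leq(1+\sqrt{2d})/2$; Musin's proof then proceeds, for each admissible integer $k$, by a further polynomial (Gegenbauer-expansion / linear programming) bound in which $\alpha$ is eliminated in favour of $\beta$ and $k$ --- not by strongly regular graph theory. Your description of $d=22$ is also more complicated than needed: the upper bound $275$ is just the Fisher-type bound $N_{22}(2)=d(d+3)/2=275$ from the Delsarte--Goethals--Seidel theorem already quoted in Section 2, attained by the McLaughlin configuration, with no uniqueness argument required to pin down the cardinality.
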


\subsection{Optimal locally two-distance sets}\label{sec ans}
\noindent 
{\it Euclidean cases}\quad By using classifications of optimal two-distance sets and Theorem \ref{max}, 
we have the following proposition. 

\begin{prop}\label{2dr}
Every optimal locally two-distance set in $\mathbb{R}^d$ is a two-distance set for $d=2, 4, 5, 6, 8$.  
Moreover there are four seven-point locally two-distance set in $\mathbb{R}^3$ up to isomorphism and 
five 29-point locally two-distance set in $\mathbb{R}^7$ up to isomorphism. 
In particular $DS_d(2)=LDS_d(2)$ for $d=1, 2, 4\leq d\leq 8$ and $LDS_3(2)=7$. 
\end{prop}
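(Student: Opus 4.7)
The strategy is to combine Theorem~\ref{max} with the classifications recalled in the previous subsection and proceed by induction on $d$. Write
\[
  f(d)=\max_{1\le i\le d-1}\bigl\{DS^{\ast}_i(2)+LDS_{d-i}(2)\bigr\}.
\]
Theorem~\ref{max} gives $LDS_d(2)\le\max\{DS_d(2),f(d)\}$, and the same proof shows that any proper locally two-distance set in $\mathbb{R}^d$ has at most $f(d)$ points. Starting from $LDS_1(2)=3$ and $LDS_2(2)=5$, I would compute $f(d)$ inductively using $DS^{\ast}_i(2)\le DS_i(2)$ from the table and Musin's value $DS^{\ast}_i(2)=i(i+1)/2$ for $i\ge 7$. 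The key arithmetic observation is that $f(5)=13$, $f(6)=19$, $f(8)=32$ are all strictly below $DS_d(2)=16,27,45$, respectively; in these three dimensions every proper locally two-distance set is strictly smaller than the two-distance optimum and cannot be optimal, so $LDS_d(2)=DS_d(2)$ and every optimal locally two-distance set is already a two-distance set.

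For $d=2$ and $d=4$ one has $f(d)=DS_d(2)$, so the bound from Theorem~\ref{max} only just rules out strict extensions, and the equality case must be examined directly. For $d=4$, the maximum $f(4)=10$ is attained only at $i=2$, which forces a saturated decomposition $|Y|=|X\setminus Y|=5$ with $\dim Y=2$; since $DS^{\ast}_2(2)=LDS_2(2)=5$ is realized uniquely by the regular pentagon, both halves would have to be regular pentagons in orthogonal $2$-planes through a common center. I would then impose the saturated-subset condition that the common cross-distance $\gamma=\sqrt{r^2+s^2}$ lies in $A(Y)\cap A(X\setminus Y)$, and use the pentagon identity $\sin(2\pi/5)=\tau\sin(\pi/5)$ together with $\tau^2=\tau+1$ to analyze the resulting metric equations and identify the assembly with the Petersen configuration. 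The planar case $d=2$ is analogous but simpler, involving only one-dimensional saturated pairs and a short direct check that the unique five-point configuration is the regular pentagon.

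The dimensions $d=3$ and $d=7$ require finer arguments because $f(d)>DS_d(2)$. For $d=3$, I would sharpen $LDS_3(2)\le 8$ to $\le 7$ by taking a saturated subset $Y\subset X$ of maximum cardinality and exhausting the possibilities $\dim Y\in\{1,2\}$ via Lemma~\ref{complement}, then ruling out $|X|=8$ using the classification of small linear and planar two-distance sets. The four isomorphism classes of $7$-point examples are constructed explicitly (subsets of a regular icosahedron, a regular octahedron together with an axial point equidistant from its vertices, an extension of a right triangular prism, and one further configuration), and completeness is verified by enumerating the combinatorial types of saturated decompositions. For $d=7$, Theorem~\ref{partial ans}(i) embeds the unique Schl\"afli $27$-point set on $S^5$ as a saturated subset $Y\subset X$ spanning a $6$-dimensional affine subspace, so that $X\setminus Y$ lies on the perpendicular axis through the center of $Y$ and has cardinality at most $LDS_1(2)=3$. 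A direct inner-product computation against the Schl\"afli distances rules out three collinear axial extensions, giving $|X|\le 29$; the five isomorphism classes then consist of the unique optimal two-distance set together with four admissible axial augmentations of the Schl\"afli set.

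The main obstacle is the boundary analysis in dimensions $4$ and $7$: the pentagon-pentagon candidate saturating $f(4)=10$ and the three-collinear-axial-point candidate saturating $f(7)=30$ survive all general combinatorial bounds, and excluding them (or collapsing them to known two-distance sets) requires explicit metric identities involving $\tau$ and the Schl\"afli inner-product spectrum. Once these extremal configurations are disposed of, the enumeration of the four (respectively five) isomorphism classes in dimensions $3$ and $7$ reduces to a routine case check on saturated decompositions.
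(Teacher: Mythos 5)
Your plan follows essentially the same route as the paper's proof: the saturated-subset decomposition and Theorem~\ref{max} dispose of $d=5,6,8$ by pure arithmetic, the equality cases $d=2,4$ are settled by examining the forced decompositions (in $d=4$ the two orthogonal pentagons are in fact excluded outright, since the cross-distance $\sqrt{1+r^2}$ must equal $\tau$ and then $A(x)=\{\tau^{1/2},\tau,\tau^{3/2}\}$, rather than collapsing to the Petersen set), and $d=3,7$ are handled by the same case analysis on $\dim Y\in\{1,d-1\}$ with axial extensions of $R_5$ and of the Schl\"afli set yielding the four, respectively five, isomorphism classes. The approach and the key computations coincide with the paper's, with only this cosmetic difference in how the $d=4$ boundary case is described.
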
 

\begin{proof}
$d=1$: It is clear that every three-point set in $\mathbb{R}^1$ which is not a one-distance set is a locally two-distance set 
and that there is no four-point locally two-distance set in $\mathbb{R}^1$. \\

For $2\leq d\leq 7$, we classify optimal locally two-distance sets in $\mathbb{R}^d$. 
For each case, we pick a saturated subset $Y$ of $X$ and we let $Y'=X\setminus Y$. 
Note that if $X$ is not a two-distance set, then $1\leq \dim(Y)\leq d-1$.\\

\noindent 
$d=2$: We will classify five-point locally two-distance sets $X$ in $\mathbb{R}^2$. 
We may assume that $\dim(Y)=1$ and $|Y|=2$, otherwise $X$ is a two-distance set. 
Let $Y=\{y_1, y_2\}$, $Y'=\{x_1, x_2, x_3\}$ and $A(y_1)=A(y_2)=\{\alpha, \beta \}$. 
Without of generality, we may assume $d(x_1,y_i)=d(x_2,y_i)=\alpha $ and $d(x_3, y_i)=\beta$ for $i\in \{1,2\}$ 
since there are exactly four possibilities for the $x_j$. 
If $d(x_1,x_3)\in \{\alpha ,\beta\}$, then $A(x_1)=\{\alpha , \beta \}$ or $A(x_3)=\{\alpha , \beta \}$. 
This is a contradiction to the maximality of the saturated subset $Y$. So $d(x_1,x_3)=\gamma \notin \{\alpha ,\beta\}$. 
Similarly $d(x_2,x_3)=\gamma$. Therefore $x_3$ is a midpoint of both the segment $y_1y_2$ and the segment $x_1x_2$. 
It is easy to check that such a locally two-distance set does not exist. 
Therefore $\dim(Y)\ne 1$ and $X$ is a two-distance set. By the classification of five-point two-distance sets in $\mathbb{R}^2$, $X=R_5$. \\

\noindent 
$d=3$: We will classify seven-point locally two-distance sets $X$ in $\mathbb{R}^3$. 
We may assume $1\leq \dim(Y)\leq 2$, otherwise $X$ is a two-distance set. We need to consider  two cases (a) $\dim(Y)=1$ and (b) $\dim(Y)=2$. \\
(a) In this case, $|Y|=2$ and $Y'=R_5$ by the above classification. Let $Y=\{y_1, y_2\}$ and $Y'=\{x_1, x_2, \ldots ,x_5\}$. 
Then $d(x_j, y_i)=1$ for any $j\in \{1,2\}$ and $i\in \{1,2,\ldots , 5\}$ or $d(x_j, y_i)=\tau$ for any $j\in \{1,2\}$ and $i\in \{1,2,\ldots , 5\}$. 
In this case, there are two seven-point locally two-distance sets up to isomorphism. \\
(b) In this case, $|Y|\in \{4,5\}$. 
If $|Y|=4$, then $|Y'|=3$. Similar to the case where $d=2$, there exists a point $x\in Y'$ which is the midpoint of the other two points. 
Then $Y\cup \{x\}$ is a five-point locally two-distance set in $\mathbb{R}^2$ and $x$ is a center of 
the circle passing through other four points. By the classification of five-point locally two-distance sets in $\mathbb{R}^2$, 
such a locally two-distance set does not exist. 
If $|Y|=5$, then $|Y'|=2$. In this case, $Y=R_5$ and there are four locally two-distance sets up to isomorphism. 
These sets contains the sets in case (a). \\

\noindent 
$d=4$:  We will classify ten-point locally two-distance sets $X$ in $\mathbb{R}^4$. 
If $\dim(Y)\ne 2$, then $X$ is a two-distance set or $|X|<10$. Therefore we assume $\dim(Y)=2$. 
Then $|Y|=|Y'|=5$ and both $Y$ and $Y'$ are sets of vertices of a regular pentagon. 
Let 
$$Y=\{( \cos \frac{2\pi j}{5},\sin \frac{2\pi j}{5}, 0, 0) |j=0,1,\ldots 4\}$$ and  
$$Y'=\{(0, 0, r\cos \frac{2\pi j}{5}, r\sin \frac{2\pi j}{5})|j=0,1,\ldots 4\}.$$  
Then $d(x,y)=\sqrt{1+r^2}>1$ for any $y\in Y$ and $x\in Y'$. 
Therefore we may assume $d(x,y)=\tau $ where $\tau =(1+\sqrt{5})/2$. 
Then $r=\sqrt{\tau}$ and $A(x)=\{\tau ^{1/2},\tau ,\tau ^{3/2}\}$ for $x\in Y'$. 
This is not a locally two-distance set. 
Therefore a ten-point locally two-distance set is a two-distance set. \\
  
\noindent 
$d=5$: We will classify sixteen-point locally two-distance sets $X$ in $\mathbb{R}^5$. 
Since $DS^\ast _i(2)+LDS_{d-i}(2)<16$ for $1\leq i\leq 4$, $X$ is a two-distance set. \\

\noindent 
$d=6$: We will classify $27$-point locally two-distance sets $X$ in $\mathbb{R}^6$. 
By Corollary \ref{width}, every $27$-point locally two-distance set in $\mathbb{R}^6$ 
is a two-distance set. \\

\noindent 
$d=7$: We will classify $29$-point locally two-distance sets $X$ in $\mathbb{R}^7$. 
If $\dim(Y)\notin \{1,6\}$, then $X$ is a two-distance set or $|X|<29$. 
We divide into two cases: (a) $\dim(Y)=1$ and (b) $\dim(Y)=6$. \\
(a) In this case, similar to the classification of case (a) for $d=3$, 
we prove that there are two $29$-point locally two-distance sets up to isomorphism. \\
(b) In this case, similar to the classification of case (b) for $d=3$, 
we can prove that there are four locally two-distance sets which contain the sets in  case (a). \\

\noindent 
$d=8$: We will consider $45$-point locally two-distance sets in $\mathbb{R}^8$.  
By Corollary \ref{width}, every $45$-point locally two-distance set in $\mathbb{R}^8$ 
is a two-distance set. 
\end{proof} 

\noindent 
{\it Spherical cases}\quad For spherical cases, we have the following proposition by Theorem \ref{partial ans} and Theorem \ref{spherical 2ds}. 

\begin{prop}
$LDS^\ast _d(2)=DS^\ast _d(2)$ for $2\leq d <40$ and $d\notin \{3, 7, 23\}$. 
When $d\in \{3, 7, 23\}$, $LDS^\ast _3(2)=7$, $LDS^\ast _7(2)=29$ and $LDS^\ast _{23}(2)=277$.
In particular, there is a unique optimal locally two-distance set in $S^{d-1}$ if $d\in \{3,7\}$ and 
there is a unique optimal locally two-distance set in $S^{23}$ if $DS^\ast _{23}(2)=276$. 
\end{prop}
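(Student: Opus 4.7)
The plan is to split the argument based on whether $d\in\{3,7,23\}$. Outside this set I use an obstruction to show $LDS^*_d(2)=DS^*_d(2)$; inside, I derive the sharp upper bound and supply a matching construction. For $d\notin\{3,7,23\}$ with $2\le d<40$, any proper locally two-distance set $X\subset S^{d-1}$ with $|X|>d(d+1)/2$ would, by Theorem \ref{partial ans}(ii) combined with Remark \ref{tight rem}, force the existence of a tight spherical four-design on $S^{d-2}$; but the classification recalled in the introduction lists these only for $d-2\in\{1,5,21\}$, i.e., $d\in\{3,7,23\}$. Combining this with the lower bound $DS^*_d(2)\ge d(d+1)/2$ from Example \ref{mid regular} and the trivial inequality $DS^*_d(2)\le LDS^*_d(2)$, I get $LDS^*_d(2)=DS^*_d(2)$. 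The borderline case $d=2$ is handled by Proposition \ref{2dr}, which gives $LDS_2(2)=5=DS^*_2(2)$.

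For $d\in\{3,7,23\}$, the proof of Lemma \ref{partial lem1}(ii) already yields the sharp upper bound $|X|\le N_{d-1}(2)+2$: the saturated subset $Y$ has codimension one in $\mathbb{R}^d$ with $|Y|=N_{d-1}(2)$, while $X\setminus Y$ sits on the orthogonal axis and hence has at most $LDS^*_1(2)=2$ points. Evaluating $N_{d-1}(2)=5,27,275$ gives the claimed bounds $7,29,277$ respectively.

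To saturate the bounds, I would start from the tight spherical five-design $Y^*\subset S^{d-1}$ guaranteed by Remark \ref{tight rem}. Writing $A(Y^*)=\{\alpha,\beta,2\}$, the antipodal relation $\langle y,y'\rangle+\langle y,-y'\rangle=0$ forces $\alpha^2+\beta^2=4$. Fix $x_0\in Y^*$ and let $W:=\{y\in Y^*\mid d(x_0,y)=\alpha\}$; by Remark \ref{tight rem}, $W$ is a tight four-design on the slice $S^{d-2}$ with $|W|=N_{d-1}(2)$, containing no antipodal pair since $\alpha\ne\sqrt{2}$ in each case. A direct check then shows $X:=W\cup\{x_0,-x_0\}$ is a locally two-distance set: within $W$ only $\alpha,\beta$ appear, each $y\in W$ satisfies $d(y,-x_0)=\sqrt{4-\alpha^2}=\beta$ so $A_X(y)=\{\alpha,\beta\}$, and $A_X(\pm x_0)=\{\alpha,2\}$ or $\{\beta,2\}$. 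Hence $|X|=N_{d-1}(2)+2$ meets the bound.

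The decomposition in the proof of Lemma \ref{partial lem1}(ii) also forces every optimal proper locally two-distance set to have exactly this axis-extension form, so uniqueness reduces to the known uniqueness (up to isomorphism) of the underlying tight four-design on $S^{d-2}$: the regular pentagon for $d=3$, the $27$-point Schl\"afli-type design for $d=7$, and the $275$-point McLaughlin-type design for $d=23$. The main subtlety --- and the only remaining obstacle --- is the $d=23$ case, which is stated conditionally on $DS^*_{23}(2)=276$: if instead $DS^*_{23}(2)=277$, a $277$-point spherical two-distance set would furnish a second optimum inequivalent to the proper axis-extension construction, and uniqueness would fail.
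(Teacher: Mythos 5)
Your proposal is correct and follows essentially the same route as the paper, which obtains this proposition directly from Theorem \ref{partial ans} and Theorem \ref{spherical 2ds}; you have merely filled in the details the paper leaves implicit (the nonexistence of tight spherical four-designs on $S^{d-2}$ for $d<40$, $d\notin\{3,7,23\}$, the upper bound $N_{d-1}(2)+2=d(d+1)/2+1$ from the proof of Lemma \ref{partial lem1}(ii), and the explicit slice-plus-poles construction inside a tight five-design). The only external input beyond what the paper cites is the uniqueness of the tight spherical four-designs on $S^{1}$, $S^{5}$ and $S^{21}$, which the paper's own uniqueness claim also tacitly requires.
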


\subsection{Optimal locally three-distance sets}
It seems difficult to determine $LDS_d(k)$ and classify the optimal configurations for $k\geq 3$. 
However there is a result for $k=3$ and $d=2$ by Erd\H{o}s-Fishburn \cite{Erd3} and Fishburn \cite{Fish1}. 

\begin{center}
\unitlength 0.1in
\begin{picture}( 15.7700, 15.9600)(  3.3300,-16.9600)
%
\special{pn 8}%
\special{pa 834 620}%
\special{pa 1400 620}%
\special{pa 1400 1186}%
\special{pa 834 1186}%
\special{pa 834 620}%
\special{fp}%
%
\special{pn 13}%
\special{sh 0.600}%
\special{ar 1118 1678 20 20  0.0000000 6.2831853}%
%
\special{pn 13}%
\special{sh 0.600}%
\special{ar 1892 902 20 20  0.0000000 6.2831853}%
%
\special{pn 13}%
\special{sh 0.600}%
\special{ar 1118 120 20 20  0.0000000 6.2831853}%
%
\special{pn 13}%
\special{sh 0.600}%
\special{ar 834 620 20 20  0.0000000 6.2831853}%
%
\special{pn 13}%
\special{sh 0.600}%
\special{ar 1400 620 20 20  0.0000000 6.2831853}%
%
\special{pn 13}%
\special{sh 0.600}%
\special{ar 1400 1186 20 20  0.0000000 6.2831853}%
%
\special{pn 13}%
\special{sh 0.600}%
\special{ar 834 1186 20 20  0.0000000 6.2831853}%
%
\special{pn 8}%
\special{pa 834 620}%
\special{pa 834 1186}%
\special{pa 344 902}%
\special{pa 834 620}%
\special{fp}%
%
\special{pn 8}%
\special{pa 1400 1186}%
\special{pa 1398 620}%
\special{pa 1890 900}%
\special{pa 1400 1186}%
\special{fp}%
%
\special{pn 8}%
\special{pa 1118 1676}%
\special{pa 834 1186}%
\special{pa 1400 1186}%
\special{pa 1118 1676}%
\special{fp}%
%
\special{pn 8}%
\special{pa 1400 620}%
\special{pa 834 620}%
\special{pa 1118 128}%
\special{pa 1400 620}%
\special{fp}%
%
\special{pn 13}%
\special{sh 0.600}%
\special{ar 352 900 20 20  0.0000000 6.2831853}%
\end{picture}%

Figure 1.
\end{center}

\begin{prop}\label{fact}
$(\mathrm{i})$ Let $X$ be an eight-point planar set. Then 
$\sum _{P\in X}|A_X(P)|\geq 24$.\\
$(\mathrm{ii})$ Every eight-point planar set $X$ with $\sum _{P\in X}|A_X(P)|=24$ is similar to Figure 1.\\
$(\mathrm{iii})$ Every eight-point locally three-distance set in $\mathbb{R}^2$ is similar to Figure 1.
In particular, $LDS_3(3)=8$. 
\end{prop}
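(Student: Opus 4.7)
The plan is to deduce (iii) as an immediate corollary of (i) and (ii): for an eight-point locally three-distance set $X \subset \mathbb{R}^2$, the local condition forces $\sum_{P\in X}|A_X(P)| \le 8\cdot 3 = 24$, while (i) gives the reverse inequality, so equality holds and (ii) then identifies $X$ up to similarity with the set of Figure~1. Direct verification shows that the square-plus-four-equilateral-triangles configuration is itself a locally three-distance set---the corner distance multiset is $\{1, \sqrt{2}, \sqrt{2+\sqrt{3}}\}$ and the apex multiset is $\{1, \sqrt{2+\sqrt{3}}, 1+\sqrt{3}\}$, via the identity $\sqrt{2+\sqrt{3}} = \sqrt{2}\,(1+\sqrt{3})/2$---which simultaneously yields $LDS_2(3) = 8$.

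For (i), I would argue by contradiction: assume $\sum_{P\in X}|A_X(P)| \le 23$. Averaging forces some $P_0 \in X$ to satisfy $|A_X(P_0)| \le 2$, so the remaining seven points of $X$ lie on at most two concentric circles $C_1, C_2$ centered at $P_0$. Setting $n_i := |C_i \cap X|$, the nontrivial splittings are $(n_1, n_2) \in \{(7,0), (6,1), (5,2), (4,3)\}$. I would analyze each case using two geometric inputs: (a) any $m$ cocircular points realize at least $\lfloor m/2 \rfloor$ distinct pairwise distances (with equality only for the regular $m$-gon), and in particular from each such point one sees at least $\lceil (m-1)/2 \rceil$ distinct distances to the others---since a second circle around a fixed point meets a given circle in at most two points; and (b) a rigidity statement: inter-circle distances between $C_1$ and $C_2$ coincide with intra-circle distances only under restrictive algebraic conditions on the radii and the polygonal structure. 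Summing the resulting lower bounds on $|A_X(Q)|$ over $Q \ne P_0$, together with $|A_X(P_0)|$, would in each of the four cases yield $\sum_{P\in X}|A_X(P)| \ge 24$, the desired contradiction.

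For (ii), I would trace the equality cases of the argument above. Tightness rules out the splittings $(7,0)$, $(6,1)$, and $(5,2)$ (each produces a strict surplus), and in the surviving $(4,3)$ case it forces the four points on $C_1$ to form a square, the three on $C_2$ to form an equilateral triangle, and the compatibility relations between inter- and intra-circle distances pin down both the ratio $r_2/r_1$ and the relative orientation. Recentering at one of the apex points, the resulting configuration is seen to coincide up to similarity with Figure~1.

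The main obstacle is the geometric case analysis in (i), particularly in the $(5,2)$ and $(4,3)$ splittings, where one must simultaneously control the shape of each cocircular subconfiguration and the interaction between the two circles. Reproducing this in detail requires a careful and lengthy combinatorial-geometric argument; it is precisely the content of Erd\H{o}s--Fishburn \cite{Erd3} and Fishburn \cite{Fish1}, to whose analysis I would ultimately appeal for the full technical core.
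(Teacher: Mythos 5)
Your proposal takes essentially the same route as the paper: the paper likewise delegates (i) and (ii) entirely to Erd\H{o}s--Fishburn and Fishburn, and obtains (iii) as an immediate consequence via $\sum_{P\in X}|A_X(P)|\le 3\cdot 8=24$ combined with (i) and (ii). Your additional check that the configuration of Figure~1 really is a locally three-distance set (with distance sets $\{1,\sqrt{2},\sqrt{2+\sqrt{3}}\}$ at the corners and $\{1,\sqrt{2+\sqrt{3}},1+\sqrt{3}\}$ at the apexes) is correct and supplies the lower bound $LDS_2(3)\ge 8$ that the paper leaves implicit.
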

\begin{proof}
(i), (ii) See \cite{Erd3}, \cite{Fish1}.\\
(iii) This is immediate from (i), (ii). 
\end{proof}

The second author proved that $DS_3(3)=12$ and that every twelve-point three-distance set in  $\mathbb{R}^3$ 
is similar to the set of vertices of a regular icosahedron (\cite{Shino3}). 

\begin{prob}
Is every locally three-distance set in $\mathbb{R}^3$ with twelve points similar to the set of vertices of a regular icosahedron?
\end{prob}

In fact, there are many differences between $k$-distance sets and locally $k$-distance sets when cardinalities are small. 
Moreover we saw that $DS_d(k)<LDS_d(k)$ for some cases. 
However no known optimal $k$-distance sets are locally $(k-1)$-distance sets.

\begin{prob}
Are there any optimal $k$-distance sets which are locally $(k-1)$-distance sets?
\end{prob}

\textbf{Acknowledgements.} We thank Oleg Musin. This work is greatly influenced by his paper \cite{Musin}. 
We also thank Eiichi Bannai for his helpful comments. 

\makeatletter
\def\@biblabel#1{#1}
\makeatother

\end{document}